\theoremstyle{plain}
\newtheorem{theorem}{Theorem}[section]
\newtheorem{lemma}[theorem]{Lemma}
\newtheorem{proposition}[theorem]{Proposition}
\newtheorem{corollary}[theorem]{Corollary}
\theoremstyle{definition}
\newtheorem{remark}[theorem]{Remark}
\newcommand{\R}{\mathbb{R}}
\newcommand{\tr}{\mathrm{tr}}
\newcommand{\ad}{\mathrm{ad}}
\newcommand{\Ad}{\mathrm{Ad}}
\newcommand{\Aut}{\mathrm{Aut}}
\newcommand{\GL}{\mathrm{GL}}
\newcommand{\spn}{span}
\newcommand{\SL}{\mathrm{SL}}
\numberwithin{equation}{section}
\author{F. Rubilar  {\small and} L. Schultz}
\date{}
\title{Adjoint orbits of $\mathfrak{sl}(2,\mathbb{R})$ and their geometry}
\begin{document}
\maketitle

\thispagestyle{empty} 
\begin{abstract}
	Let $\SL(n,\mathbb{R})$ be the special linear group and $\mathfrak{sl}(n,\mathbb{R})$ its Lie algebra.  We study geometric properties associated to the adjoint orbits in the simplest non-trivial case, namely, those of $\mathfrak{sl}(2,\mathbb{R})$. In particular, we show that  just three possibilities arise: either the adjoint orbit is a one--sheeted hyperboloid, or a two--sheeted hyperboloid, or else a cone. 
	In addition, we introduce a specific potential and study the corresponding gradient vector field and its dynamics when we restricted to the adjoint orbit.  We conclude by describing the symplectic structure on  these adjoint orbits coming from the well known Kirillov--Kostant--Souriau symplectic form on coadjoint orbits.
\end{abstract}
\section{Introduction}

This text is of expository nature. 
We carry out the exercise of explicitly describing adjoint orbits of  $\mathfrak{sl}(2,\mathbb{R})$
together with the equations defining them as real affine algebraic varieties, 
over which we also describe  symplectic structures. 

We then focus on a single orbit that has the shape of a one-sheeted hyperboloid, 
presenting  it as a doubly ruled surface 
whose tangent bundle we describe explicitly.
We add a potential carefully  chosen to be a Morse
 function, and study the orbits of the corresponding  gradient flow. 
For the case of compact manifolds the classical Morse--Smale theorem  
states that the trajectories of the gradient flow 
converge to critical points of the potential. 
Here, in contrast,  we show that some trajectories
are not complete, thus highlighting the importance of the 
hypothesis of compactness in the Morse--Smale theory. 
For applications to mathematical physics it is essential to consider examples where some trajectories are not complete in time.

Even though our calculations are straightforward, 
we believe it is useful to have the results readily available in the literature. 
The study of the geometry of adjoint orbits is a classical topic in Geometry and Lie theory. 
However, the literature is mainly presented following an abstract approach, 
so, in this paper, we exhibit most of the details. 
Some references that focus in specific cases of adjoint and coadjoint orbits are   
\cite{BeHo}, for classical compact Lie groups, and \cite{Arv}, 
where there is an excellent explanation of the geometry of flag manifolds arising from the 
adjoint representation of compact semisimple Lie groups.\\

We study the geometry of those 
adjoint orbits which arise from the adjoint representation 
$\Ad\colon \SL(2, \R)\rightarrow\mathfrak{gl}(\mathfrak{sl}(2,\R))$, 
where for each $g\in\SL(2, \R)$ and $H\in\mathfrak{sl}(2,\R)$ the adjoint action is  $\Ad_g(H)=gHg^{-1}$.
Let $A, B, C$ be the basis of $\mathfrak{sl}(2,\mathbb{R})$ given by
\begin{equation}
A=\left[\begin{array}{cc}
0&1\\
1&0
\end{array}\right],\qquad B=\left[\begin{array}{cc}
1&0\\
0&-1
\end{array}\right],\qquad C=\left[\begin{array}{cc}
0&1\\
-1&0
\end{array}\right].
\label{baseSL2}
\end{equation}
We 
decompose $H=xA+yB+zC$ and then find out that the adjoint orbits are of one of the following three types:
\begin{itemize}
	\item a \textbf{one--sheeted hyperboloid}, given by the equation
	\[\mathcal{O}\colon x^2+y^2-z^2=\lambda^2,\hspace{1cm} \lambda\neq0;\]
	\item a \textbf{two--sheeted hyperboloid}, given by
	\[\mathcal{O}_{1}^{+}\colon x^2+y^2-z^2=-\lambda^2,\hspace{1cm} z>0,\lambda\neq0,\]
	\[\mathcal{O}_{1}^{-}\colon x^2+y^2-z^2=-\lambda^2,\hspace{1cm} z<0,\lambda\neq0;\]
	\item a \textbf{cone}, given by
	\[\mathcal{O}_{2}^{+}\colon x^2+y^2-z^2=0,\hspace{1cm} z>0,\]
	\[\mathcal{O}_{2}^{-}\colon x^2+y^2-z^2=0,\hspace{1cm} z<0,\]
	\[\mathcal{O}_{2}^{0}=\{0\}.\]
\end{itemize}

\vspace{3mm}

We  endow the adjoint orbit  $\mathcal{O}$  with the
symplectic structure arising from a coadjoint  orbit, thus  realizing it 
 as a  symplectic manifold.  
Namely, we use the isomorphism between adjoint and coadjoint orbits provided by the 
Killing form to give   this adjoint orbit 
  the symplectic structure pulled-back  from the  well known 
Kirillov--Kostant--Souriau form on the corresponding coadjoint orbit.

We then consider the function $f(x, y , z)=yz$  over $\mathfrak{sl}(2, \R)$ 
and regard its restriction to the orbit  $\mathcal{O}$
as a Morse function, calculating 
the trajectories of its  gradient vector field.
We  analyse the limit points of the gradient flow, 
and  compare the results obtained here to well known results about Morse flows for the compact case.

We observe that every orbit of the adjoint action on $\mathfrak{sl}(2, \R)$ 
is of one of the three types presented here, hence we 
have a complete description.

In general, understanding details of the family of all adjoint orbits 
for a given Lie algebra is a deep question with applications to non trivial aspects of the theory.
Some such research areas, among many,  are: the theory of Slodowy slices, 
the Springer theory, and  the Fukaya categories in homological mirror aymmetry.
Therefore, the calculations we present here may be regarded as a warm up  
exercise in preparation to the study of more advanced topics.

\section{Preliminaries}

We start by recalling some basic definitions of Lie theory. For further details, we suggest \cite{ErWi, SM}.

A \textbf{Lie group} is a smooth manifold $G$ with a smooth map from 
$G\times G\rightarrow G$ that makes $G$ into a group and such that the inverse map 
$g\mapsto g^{-1}$ is also smooth.

Let $\mathrm{M}(n,\mathbb{R})$ be the set of $n\times n$ matrices with entries in the real numbers.

The {\bf general linear group $\mathrm{GL}(n,\mathbb{R})$} is the subset of 
$\mathrm{M}(n,\mathbb{R})$ of non-singular matrices with matrix multiplication as group operation. 

By definition a {\bf matrix Lie group} is a  closed subgroup of $\GL(n,\R)$.

	For example, the  {\bf special linear group $\SL(n,\mathbb{R})$} 
is the subgroup of $\mathrm{GL}(n,\mathbb{R})$ of non-singular matrices of determinant $1$.

	A \textbf{Lie algebra} is a vector space $\mathfrak{g}$ over a field $\mathbb{F}$ 
together with a \textbf{Lie bracket}, that is, a bilinear map 
	$$\mathfrak{g}\times \mathfrak{g} \rightarrow \mathfrak{g},\qquad (x, y)\mapsto \left[x,y\right],$$
	satisfying
	\begin{itemize}
		\item $\left[x,x\right]=0 \textup{ for each } x\in \mathfrak{g},$ 
		\item Jacobi identity: $\left[x,\left[y,z\right]\right]+\left[y,\left[z,x\right]\right]+\left[z,\left[x,y\right]\right]=0\textup{ for every }  x,y,z\in \mathfrak{g}.$
	\end{itemize}
\begin{remark}
	If the characteristic of the $\mathbb{F}$ is not $2$, then the first condition is 
equivalent to anticommutativity \[\left[x, y\right]=-\left[y, x\right]\textup{ for each }  x,y \in \mathfrak{g}.\]
\end{remark}

The \textbf{centre} of a Lie algebra  consists of all those elements 
$x$ in $\mathfrak{g}$, subject to $[x,y] = 0$ for all $y$ in $\mathfrak{g}$.

Let $\mathfrak{g}_1, \mathfrak{g}_2$ be two Lie algebras over a field $\mathbb{F}$. 
A map $\varphi\colon\mathfrak{g}_1\rightarrow\mathfrak{g}_2$ is a \textbf{Lie algebra homomorphism} 
if  $\varphi$ is linear and satisfies $$\varphi(\left[x, y\right])=\left[\varphi(x), \varphi(y)\right],$$ 
for each $x,y\in \mathfrak{g}_1$. If $\varphi$ is  bijective, we call it an \textbf{isomorphism}.

There are several ways to understand the Lie algebra of a Lie group. 
Here we consider it as the tangent space at the identity element of the group, 
that is, if $G$ is a Lie group, then its Lie algebra $\mathfrak{g}$ corresponds to $T_eG$.

For instance, $\SL(n, \R)$ is a matrix Lie group with Lie algebra $\mathfrak{sl}(n,\R)$. 
In terms of matrices, $\mathfrak{sl}(n,\mathbb{R})$ is the Lie algebra of $n\times n$ 
matrices  with trace $0$ and coefficients in $\mathbb{R}$, 
where the Lie bracket is the usual commutator $[X,Y]=XY-YX$.

Let $A$ be a $n\times n$ matrix over $\R$ or $\mathbb{C}$. The \textbf{exponential} of $A$ is the $n\times n$ matrix
\[\exp(A)=\sum_{k=0}^{\infty}\frac{A^k}{k!}.\]

	An important result in Lie theory is that if $G$ is a matrix Lie group 
with algebra $\mathfrak{g}$, then $\exp(A)\in G$ holds for each $A\in\mathfrak{g}$. 
Below we provide a direct proof when $G=\SL(n,\R)$.

\begin{proposition}
	For any $A\in\mathfrak{sl}(n, \R)$, we have $\exp(A)\in\SL(n, \R)$.
\end{proposition}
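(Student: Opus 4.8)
The plan is to use the classical identity relating the determinant of a matrix exponential to the trace of the matrix, namely $\det(\exp(A)) = \exp(\tr(A))$, and then observe that membership in $\mathfrak{sl}(n,\mathbb{R})$ forces $\tr(A) = 0$, so that $\det(\exp(A)) = \exp(0) = 1$, which is exactly the condition for $\exp(A)$ to lie in $\SL(n,\mathbb{R})$. Of course one must also note that $\exp(A)$ has real entries (immediate from the power series, since $A$ is real) and is invertible (with inverse $\exp(-A)$), so it genuinely lies in $\GL(n,\mathbb{R})$ to begin with.

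First I would establish the key formula $\det(\exp(A)) = \exp(\tr(A))$. The cleanest self-contained route is to pass to an upper-triangular form: over $\mathbb{C}$ every matrix $A$ is conjugate to an upper-triangular matrix $T = gAg^{-1}$ whose diagonal entries are the eigenvalues $\mu_1,\dots,\mu_n$ of $A$ (repeated with multiplicity). Since $\exp(gAg^{-1}) = g\exp(A)g^{-1}$ — which follows term-by-term from the power series — and both determinant and trace are conjugation-invariant, it suffices to prove the identity for $T$. For an upper-triangular $T$, each power $T^k$ is upper-triangular with diagonal entries $\mu_i^k$, so $\exp(T)$ is upper-triangular with diagonal entries $\exp(\mu_i)$; hence $\det(\exp(T)) = \prod_i \exp(\mu_i) = \exp\!\left(\sum_i \mu_i\right) = \exp(\tr(T))$. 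An alternative, if one prefers to avoid eigenvalue arguments, is to fix $A$ and consider the smooth curve $\varphi(t) = \det(\exp(tA))$, show via the Jacobi/Liouville formula that $\varphi'(t) = \tr(A)\,\varphi(t)$ with $\varphi(0) = 1$, and conclude $\varphi(t) = \exp(t\,\tr(A))$; evaluating at $t=1$ gives the result.

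Once the formula is in hand, the proof of the proposition is one line: given $A \in \mathfrak{sl}(n,\mathbb{R})$ we have $\tr(A) = 0$ by definition of $\mathfrak{sl}(n,\mathbb{R})$, hence $\det(\exp(A)) = \exp(0) = 1$; together with the observation that $\exp(A) \in \mathrm{M}(n,\mathbb{R})$, this shows $\exp(A) \in \SL(n,\mathbb{R})$.

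The main obstacle is purely expository rather than mathematical: deciding how much of the background (convergence of the exponential series, the identity $\exp(gAg^{-1}) = g\exp(A)g^{-1}$, reduction to triangular form, or alternatively the derivative-of-determinant formula) to prove from scratch versus cite. The convergence of $\sum_k A^k/k!$ in the (finite-dimensional) matrix norm is standard and can be assumed; the conjugation identity is an immediate consequence of $(gAg^{-1})^k = g A^k g^{-1}$ and continuity of conjugation. If one worries about the triangularization step requiring complex eigenvalues while we are working over $\R$, note this causes no difficulty: we may view $A \in \mathfrak{sl}(n,\R) \subset \mathrm{M}(n,\mathbb{C})$, carry out the computation over $\mathbb{C}$, and the conclusion $\det(\exp(A)) = 1$ is a statement about a real number that holds regardless. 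I would present the Liouville-formula argument if the paper wants to stay within a differential-geometric spirit, or the triangularization argument if a purely algebraic proof is preferred; either is short.
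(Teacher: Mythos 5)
Your proposal is correct and follows essentially the same route as the paper: reduce to an upper-triangular (Jordan) form, observe that $\exp$ of a triangular matrix has diagonal entries $e^{\lambda_i}$, and conclude $\det(\exp(A))=e^{\tr(A)}=1$. Your added care about conjugation invariance, real entries, and complexification tightens the argument but does not change the approach.
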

\begin{proof}
	Consider the Jordan form of $A$. If $\{\lambda_i\}_{i=1}^{l}$ are the eigenvalues of $A$, then we have
	\begin{align*}
		\exp(A)=\sum_{k=0}^{\infty}\frac{A^k}{k!}=&\left[\begin{array}{cccc}
			\sum_{k=0}^{\infty}\frac{\lambda_1^k}{k!}&*&*&*\\
			0&\sum_{k=0}^{\infty}\frac{\lambda_2^k}{k!}&*&*\\
			\vdots&\vdots&\ddots&\vdots\\
			0&0&0&\sum_{k=0}^{\infty}\frac{\lambda_l^k}{k!}
		\end{array}\right]\\
		=&\left[\begin{array}{cccc}
			e^{\lambda_1}&*&*&*\\
			0&e^{\lambda_2}&*&*\\
			\vdots&\vdots&\ddots&\vdots\\
			0&0&0&e^{\lambda_l}
		\end{array}\right].
	\end{align*}
We get immediately the equality
	\[\det(\exp(A))=\prod_{i=1}^{l}e^{\lambda_i}=e^{\sum_{i=1}^{l}\lambda_i}=e^{\tr(A)}=e^0=1,\]
	and so $\exp(A)\in\SL(n, \R)$.
\end{proof}

	Let $G$ be a Lie group with Lie algebra $\mathfrak{g}$.	
The \textbf{adjoint representation of G}  on $\mathfrak{g}$ is the group homomorphism
	\[\begin{array}{rcl}
	\Ad\colon G&\rightarrow &\Aut(\mathfrak{g})\\
	g&\mapsto&\Ad_g.
	\end{array}\]

For example, for $G=\mathrm{SL}(n,\R)$ and $\mathfrak{g}=\mathfrak{sl}(n,\R)$, the group homomorphism
is given by
  \[\begin{array}{rcl}
 \Ad\colon\SL(n,R)&\rightarrow &\Aut(\mathfrak{sl}(n,\mathbb{R}))\\
 g&\mapsto&\Ad_g,
 \end{array}\]
where $\Ad_g(X)=gXg^{-1}$ for every $X\in\mathfrak{sl}(n,\mathbb{R})$.

Given  $H\in\mathfrak{sl}(n,\mathbb{R})$, its \textbf{adjoint orbit} is \[\mathcal{O}(H)=\{gHg^{-1}:g\in\SL(n,\mathbb{R})\}.\]


We will see that the geometric structure on adjoint orbits depends strongly on the element $H\in\mathfrak{sl}(2,\mathbb{R})$. 
We will give a complete characterization of those orbits.

	The \textbf{adjoint representation of the Lie algebra} $\mathfrak{g}$ 
in $\mathfrak{gl}(\mathfrak{g})$ is the homomorphism
	\[\begin{array}{rcl}
	\ad\colon\mathfrak{g}&\rightarrow &\mathfrak{gl}(\mathfrak{g})\\
	x&\mapsto&\ad_x,
	\end{array}\]
here $\ad_x(y)=\left[x,y\right]$ for each $x,y\in\mathfrak{g}$.

It follows by bilinearity of the Lie bracket that $\ad_x$ is linear for each $x\in\mathfrak{g}$; 
the same  is true for the correspondence $x\mapsto \ad_x$. 
In order to prove that $\ad_x$ is a homomorphism we just  have to check that  $\ad_x$ satisfies the identity
\[\ad(\left[x,y\right])=\ad_x\circ\ad_y-\ad_y\circ\ad_x,\qquad \textup{ for every }x,y\in\mathfrak{g}.\]
The above equality holds precisely because of the Jacobi identity. 
The kernel of $\ad$ is the \textbf{centre} of $\mathfrak{g}$.

\section{Adjoint orbits of $\mathfrak{sl}(2,\R)$}
Here we study the geometry of orbits of $\mathfrak{sl}(2,\R)$ given by the adjoint action, 
namely, the action induced by the adjoint representation of $\SL(2,\R)$ in its  associated Lie algebra $\mathfrak{sl}(2, \R)$. 
We will classify them into three classes: either the adjoint orbit is a one--sheeted hyperboloid, 
or a two--sheeted hyperboloid, or else a cone, 
depending on the choice of the element that we take in the Lie algebra $\mathfrak{sl}(2,\R)$. 
Recall the basis of $\mathfrak{sl}(2,\R)$ introduced in (\ref{baseSL2}), namely
$$A=\left[\begin{array}{cc}
	0&1\\
	1&0
\end{array}\right],\qquad B=\left[\begin{array}{cr}
	1&0\\
	0&-1
\end{array}\right],\qquad C=\left[\begin{array}{rr}
	0&1\\
	-1&0
\end{array}\right].$$
\subsection{The one--sheeted hyperboloid}
Here we study the orbit of $\lambda A$ in $\mathfrak{sl}(2,\R)$ for $\lambda\in\R\backslash\{0\}$.\\

Let be $H$ in $\mathfrak{sl}(2,\R)$ and consider the decomposition
\[H=xA+yB+zC;\quad x, y, z\in\R.\]

\begin{proposition}
For fixed $\lambda\neq0$, the adjoint orbit $\mathcal{O}(\lambda A)$ 
is the set of matrices $H=xA+yB+zC$ in $\mathfrak{sl}(2,\R)$ that satisfy 	
$$x^2+y^2-z^2=\lambda^2.$$ 
\end{proposition}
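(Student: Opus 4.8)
The plan is to exhibit the quantity $x^{2}+y^{2}-z^{2}$ as (minus) the determinant of $H$, which is visibly invariant under conjugation, and then to prove conversely that the whole level set $\{\det H=-\lambda^{2}\}$ is a single orbit. Writing $H=xA+yB+zC$ in matrix form gives the traceless matrix $\left(\begin{smallmatrix} y & x+z\\ x-z & -y\end{smallmatrix}\right)$, and a one-line computation yields $\det H=-y^{2}-(x+z)(x-z)=-(x^{2}+y^{2}-z^{2})$. Since $\det(gHg^{-1})=\det H$ for every $g\in\SL(2,\R)$, the function $H\mapsto x^{2}+y^{2}-z^{2}$ is constant along the orbit $\mathcal{O}(\lambda A)$, and evaluating at $\lambda A$ (for which $x=\lambda$, $y=z=0$) gives the value $-\det(\lambda A)=\lambda^{2}$. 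This settles the inclusion $\mathcal{O}(\lambda A)\subseteq\{x^{2}+y^{2}-z^{2}=\lambda^{2}\}$.

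For the reverse inclusion I would fix $H$ with $x^{2}+y^{2}-z^{2}=\lambda^{2}$. Then $H$ has trace $0$ and determinant $-\lambda^{2}$, so its characteristic polynomial is $t^{2}-\lambda^{2}$; because $\lambda\neq 0$, the eigenvalues $\pm\lambda$ are real and distinct, and hence $H$ is diagonalizable over $\R$. Choose eigenvectors $v_{+}$ and $v_{-}$ of $H$ for $\lambda$ and $-\lambda$ respectively, and note that $(1,1)$ and $(1,-1)$ are eigenvectors of $\lambda A$ for $\lambda$ and $-\lambda$. The linear map $g_{0}$ with $g_{0}(1,1)=v_{+}$ and $g_{0}(1,-1)=v_{-}$ is then invertible, i.e. $g_{0}\in\GL(2,\R)$, and since $g_{0}(\lambda A)g_{0}^{-1}$ agrees with $H$ on the basis $\{v_{+},v_{-}\}$ we get $g_{0}(\lambda A)g_{0}^{-1}=H$.

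The one genuinely delicate point is upgrading $g_{0}\in\GL(2,\R)$ to an element of $\SL(2,\R)$ without altering the conjugation. If $\det g_{0}>0$, replacing $g_{0}$ by $(\det g_{0})^{-1/2}g_{0}$ produces an element of $\SL(2,\R)$ effecting the same conjugation, since scalars are central. If $\det g_{0}<0$, first replace $g_{0}$ by $Ag_{0}$: because $A$ commutes with $\lambda A$ one still has $(Ag_{0})(\lambda A)(Ag_{0})^{-1}=H$, while $\det(Ag_{0})=-\det g_{0}>0$, so the previous rescaling then applies. In all cases we obtain $g\in\SL(2,\R)$ with $g(\lambda A)g^{-1}=H$, hence $H\in\mathcal{O}(\lambda A)$. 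The substantive content is therefore the eigenvalue bookkeeping together with this sign correction; the determinant identity and the forward inclusion are immediate.
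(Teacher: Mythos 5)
Your forward inclusion is exactly the paper's argument: determinant invariance under conjugation together with the computation $\det H=-(x^{2}+y^{2}-z^{2})$, evaluated at $\lambda A$. Your converse is also essentially the paper's idea --- the distinct real eigenvalues $\pm\lambda$ force real diagonalizability, and the only remaining issue is arranging determinant $1$ --- except that you build the conjugating matrix directly out of eigenvectors of $H$ and of $\lambda A$, whereas the paper conjugates both $H$ and $\lambda A$ to the common diagonal matrix $\mathrm{diag}(\lambda,-\lambda)$ and composes the two conjugators. Either route is fine.

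There is, however, one concrete error in your sign correction. When $\det g_{0}<0$ you replace $g_{0}$ by $Ag_{0}$ and assert $(Ag_{0})(\lambda A)(Ag_{0})^{-1}=H$ ``because $A$ commutes with $\lambda A$.'' Left multiplication puts $A$ on the outside of the conjugation: $(Ag_{0})(\lambda A)(Ag_{0})^{-1}=A\bigl(g_{0}(\lambda A)g_{0}^{-1}\bigr)A^{-1}=AHA^{-1}$, which is not $H$ in general, and since $\det A=-1$ the outer factor $A$ cannot be absorbed into an $\SL(2,\R)$ conjugation. The commutation you invoke justifies the opposite move: take $g_{0}A$ (right multiplication), so that $(g_{0}A)(\lambda A)(g_{0}A)^{-1}=g_{0}(\lambda A)g_{0}^{-1}=H$ while $\det(g_{0}A)=-\det g_{0}>0$, after which your positive rescaling by $(\det)^{-1/2}$ applies. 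Equivalently, you could simply replace $v_{+}$ by $-v_{+}$, which flips the sign of $\det g_{0}$ without disturbing the eigenvector conditions. With that one-line fix your proof is complete and matches the paper's in substance.
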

\begin{proof}
First we prove that if $H$ belongs to such orbit, then $x^2+y^2-z^2=\lambda^2.$
The adjoint orbit of $\lambda A$  is by definition
\[\mathcal{O}(\lambda A)=\{g\lambda Ag^{-1}:g\in\SL(2,\mathbb{R})\}.\]
Hence,  if $H\in\mathcal{O}(\lambda A)$, there exists  $M\in\SL(2,\R)$ such that $H=M\lambda AM^{-1}.$ 
Since the determinant of a matrix is invariant under conjugation, we have
\[\det(H)=\det(\lambda A).\]
Thus, we obtain
\begin{align*}
\det(H)&=\det\left(\left[\begin{array}{cc}
y&x+z\\
x-z&-y	
\end{array}\right]\right)=z^2-x^2-y^2,\\
\det(\lambda A)&=\det\left(\left[\begin{array}{cc}
0&\lambda\\
\lambda&0	
\end{array}\right]\right)=-\lambda^2,
\end{align*}
which implies 
\begin{equation}
x^2+y^2-z^2=\lambda^2. 
\label{osh}
\end{equation}
Thus we conclude the first part of the proof. It is a well known fact 
that Equation (\ref{osh}) defines a surface in $\R^3$ called a \textbf{one--sheeted hyperboloid}.\\

Now we show that, reciprocally, if $H=xA+yB+zC$ satisfies Equation (\ref{osh}), 
then $H$ belongs to $\mathcal{O}(\lambda A)$. 
Given any matrix $N\in \mathfrak{sl}(2, \mathbb{R})$, 
its characteristic polynomial is completely determined by its determinant. 
Indeed, if $\rho_N$ denotes the characteristic polynomial of $N$, we have
$$
\rho_N(t)=t^2+\mathrm{det}(N).
$$
Thus, once $H$ satisfies Equation (\ref{osh}), we get $\mathrm{det}(H)=-\lambda^2$ and therefore
$$
\rho_H(t)=t^2-\lambda^2=(t-\lambda)(t+\lambda).
$$
As soon as $\lambda$ is assumed to be different than zero, 
we know that $H$ has two distinct eigenvalues, and so $H$ is diagonalizable. Let
$$
D=\left[\begin{array}{cc}
\lambda&0\\
0&-\lambda	
\end{array}\right]
$$
and $P\in\mathrm{GL}(2,\mathbb{R})$ be such that $PHP^{-1}=D$. 
Note that we can assume $P\in \mathrm{SL}(2,\mathbb{R})$ by 
multiplying its first column by $\frac{1}{\det(P)}$ if necessary. 
By the same argument, we find $P_0\in \mathrm{SL}(2,\mathbb{R})$ 
such that $P_0\lambda AP_{0}^{-1}=D$. Thus, we get
$$
(P_0^{-1}P)H(P_0^{-1}P)^{-1}=\lambda A
$$
with $P_0^{-1}P\in \mathrm{SL}(2,\mathbb{R})$. 
We conclude that if $H=xA+yB+zC$ satisfies Equation (\ref{osh}), 
then $H$ belongs to the orbit $\mathcal{O}(\lambda A)$ and we are done.
\end{proof}

\begin{remark}
	Since $\det(\lambda B)$ satisfies Equation (\ref{osh}), 
the above argument implies $\mathcal{O}(\lambda A)=\mathcal{O}(\lambda B)$.
\end{remark}
\begin{remark}
	In the complex case, i.e., for $\mathfrak{sl}(2,\mathbb{C})$, if we consider $$H_0=\left[\begin{array}{cc}
	1&0\\
	0&-1
	\end{array}\right],$$
	then we get that its adjoint orbit $\mathcal{O}(H_0)$ is diffeomorphic to 
$T^*\mathbb{P}^1$, specifically, the cotangent bundle of the complex projective line. 
So, the geometric structure of the adjoint orbit is quite different. 
Moreover, in \cite{GGSM1}, Gasparim, Grama, and San Martin gave a complete description 
of the diffeomorphism type of adjoint orbits for diagonal matrices in $\mathfrak{sl}(n,\mathbb{C})$.
\end{remark}
\subsection{The two--sheeted hyperboloid} Now we turn to the geometric 
structure of the adjoint orbit of $\lambda C$.
\begin{proposition}
Fix $\lambda \in\mathbb{R}\backslash\{0\}$. The adjoint orbit $\mathcal{O}(\lambda C)$ is the set of matrices $H=xA+yB+zC$ in $\mathfrak{sl}(2,\R)$ subject to
$$x^2+y^2-z^2=-\lambda^2.$$
\end{proposition}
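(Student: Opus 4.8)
The plan is to prove the two set inclusions separately, mirroring the previous proposition but replacing diagonalization with the real canonical form of a matrix having complex eigenvalues.

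First I would check that $\mathcal{O}(\lambda C)$ is contained in the hyperboloid. Since conjugation preserves the determinant, every $H\in\mathcal{O}(\lambda C)$ satisfies $\det(H)=\det(\lambda C)$. Computing $\det(\lambda C)=\lambda^2$ and, as in the previous case, $\det(H)=z^2-x^2-y^2$, this gives $x^2+y^2-z^2=-\lambda^2$ immediately. I would also record that the characteristic polynomial is then $\rho_H(t)=t^2+\lambda^2$, so that the eigenvalues are the purely imaginary numbers $\pm i\lambda$.

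For the reverse inclusion I would start from an arbitrary $H=xA+yB+zC$ with $x^2+y^2-z^2=-\lambda^2$, so that $\det(H)=\lambda^2>0$ and $H$ has eigenvalues $\pm i\lambda$. As these are not real, $H$ is not diagonalizable over $\R$; instead I would take a complex eigenvector $w=u+iv$ for $i\lambda$ and use the resulting relations $Hu=-\lambda v$ and $Hv=\lambda u$ to see that the matrix of $H$ in the real basis $\{u,v\}$ is exactly $\lambda C$. This yields $P\in\GL(2,\R)$ with $P^{-1}HP=\lambda C$, and hence $H=P(\lambda C)P^{-1}$. It then remains only to arrange that $P$ can be taken in $\SL(2,\R)$, after which $H$ is exhibited in the orbit.

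The main obstacle is precisely this normalization, which is more delicate than in the diagonalizable case. There one rescales a single eigenvector column to force $\det P=1$; here the columns of $P$ are the real and imaginary parts of one complex eigenvector and cannot be rescaled independently without destroying the rotation form. Concretely, the $\GL(2,\R)$-centralizer of $\lambda C$ consists of the matrices $aI+bC$, all of determinant $a^2+b^2>0$, so replacing $P$ by any other conjugator leaves the sign of $\det P$ unchanged and it can only be normalized to $\det P=\pm 1$. Deciding when this sign equals $+1$ is the heart of the argument: it is exactly the orientation invariant recording the sign of $z$, and it is what governs whether one obtains the whole hyperboloid $x^2+y^2-z^2=-\lambda^2$ or only the sheet with $\operatorname{sgn} z=\operatorname{sgn}\lambda$. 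I would therefore devote the main effort to this sign computation.
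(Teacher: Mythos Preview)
Your plan is correct and follows essentially the same route as the paper: invariance of the determinant gives the forward inclusion, and the real Jordan form of a matrix with eigenvalues $\pm i\lambda$ gives the reverse one. The paper handles the sign obstruction by the direct computation $M(\lambda C)M^{-1}=\lambda\begin{pmatrix}-us-tv&u^2+v^2\\-s^2-t^2&us+tv\end{pmatrix}$, whose upper-right entry shows $\lambda C$ and $-\lambda C$ are not $\SL(2,\R)$-conjugate; your centralizer observation (that $\{aI+bC\}$ has only positive determinants, so $\operatorname{sgn}\det P$ is rigid) is a cleaner, more conceptual version of the same fact. You are also right that this forces the orbit to be only the sheet with $\operatorname{sgn} z=\operatorname{sgn}\lambda$, which is exactly what the paper records in the remark following the proof.
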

\begin{proof}
For $H\in\mathcal{O}(\lambda C)$ there exists $N\in\SL(2,\R)$ such that $N\lambda CN^{-1}=H.$ 
Therefore we have
\[\det(\lambda C)=\det(NHN^{-1})=\det(H),\]
and so we get
\begin{align*}
\det(H)&=\det\left(\left[\begin{array}{cc}
y&x+z\\
x-z&-y	
\end{array}\right]\right)=z^2-x^2-y^2,\\
\det(\lambda C)&=\det\left(\left[\begin{array}{cc}
0&\lambda\\
-\lambda&0	
\end{array}\right]\right)=\lambda^2,
\end{align*}
which implies 
\begin{equation}
x^2+y^2-z^2=-\lambda^2 .
\label{tsh}
\end{equation}

For the reciprocal, we start by showing that there is no $M\in \mathrm{SL}(n, \mathbb{R})$ 
such that $M(\lambda C) M^{-1}=-\lambda C$. Without loss of generality take $\lambda>0$. Then, for
$$
M=\left[\begin{array}{cc}
u&v\\
s&t	
\end{array}\right]
$$
we reach
$$
M(\lambda C)M^{-1}=\lambda\left[\begin{array}{cc}
-us-tv&u^2+v^2\\
-s^2-t^2&us+tv	
\end{array}\right].
$$
As $u^2+v^2\geq 0$, we easily conclude that there is no $M\in \mathrm{SL}(n, \mathbb{R})$ 
such that $M(\lambda C) M^{-1}=-\lambda C$. 
The bottom line is that we have $\lambda C\in\mathcal{O}_1^+$ if $\lambda>0$ and 
$\lambda C\in\mathcal{O}_1^-$ if $\lambda<0.$ 

Next we show that if $H=xA+yB+zC$ is such that $x, y, z$ satisfy (\ref{tsh}), 
then $H$ belongs to $\mathcal{O}_1^+$ or $\mathcal{O}_1^-$. 
To verify this, we use an argument similar to the one we used in the previous subsection. 
Once $H$ is such that Equation (\ref{tsh}) holds, its characteristic polynomial is given by
$$
\rho_H(t)=t^2+\lambda^2=(t+i\lambda)(t-i\lambda).
$$
So, we can write $H$ in its real Jordan form in either of two different ways
$$
PHP^{-1}=\left[\begin{array}{cc}
0&\lambda\\
-\lambda&0	
\end{array}\right]
$$
or
$$
RHR^{-1}=\left[\begin{array}{cc}
0&-\lambda\\
\lambda&0	
\end{array}\right],
$$
always with $R, P \in \mathrm{GL}(2,\mathbb{R})$. 
The structural  difference between these two cases is that if $\det(R)>0$ 
then $\det(P)<0$, and vice versa. Assume $\det(P)>0$. 
Then we can define $\tilde{P}=\frac{1}{\sqrt{\det(P)}}P$ in order to get
$$
\tilde{P}H\tilde{P}^{-1}=\lambda C,
$$
where $\tilde{P}\in \mathrm{SL}(2,\mathbb{R})$ 
and we conclude that $H$ belongs to the orbit $\mathcal{O}(\lambda C)$. In the case when $\det(P)<0$, we repeat the same construction for $R$ in order to get
$$
\tilde{R}H\tilde{R}^{-1}=\lambda C.
$$

\end{proof}
\begin{remark}
Equation (\ref{tsh}) defines a  \textbf{two--sheeted hyperboloid}.
In this case, we have two situations. 
Either $z>0$ (the upper half part of the hyperboloid) or $z<0$ (the lower half), which respectivelly correspond to
\[\mathcal{O}_{1}^{+}\colon x^2+y^2-z^2=-\lambda^2,\hspace{1cm} z>0,\lambda\neq0,\]
\[\mathcal{O}_{1}^{-}\colon x^2+y^2-z^2=-\lambda^2,\hspace{1cm} z<0,\lambda\neq0.\]
\end{remark}


\subsection{The cone}Note that we have analysed adjoint orbits of matrices with determinant either positive or negative. 
In this section we study the remaining situation, namely, adjoint orbits of matrices with zero determinant. 
In order to do this, define $D=A+C$ and consider its adjoint orbit $\mathcal{O}(\lambda D)$. 
The main result reads as follows.
\begin{proposition}
The adjoint orbit $\mathcal{O}(\lambda D)$ corresponds to matrices 
$H=xA+yB+zC\in\mathfrak{sl}(2,\R)$ subject to $x^2+y^2-z^2=0.$
\end{proposition}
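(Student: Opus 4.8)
The plan is to follow the same two-sided strategy used in the previous two propositions, namely: first show that every matrix in $\mathcal{O}(\lambda D)$ satisfies $x^2+y^2-z^2=0$, using invariance of the determinant under conjugation; then show the converse, that every traceless $H$ with $\det(H)=0$ lies in the orbit of $\lambda D$, this time via the real Jordan (nilpotent) normal form rather than diagonalization.

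First I would compute $\det(\lambda D)$. Since $D=A+C=\left[\begin{smallmatrix}0&2\\0&0\end{smallmatrix}\right]$, we get $\det(\lambda D)=0$, and if $H=M(\lambda D)M^{-1}$ for some $M\in\SL(2,\R)$ then $\det(H)=\det(\lambda D)=0$. Writing $H=\left[\begin{smallmatrix}y&x+z\\x-z&-y\end{smallmatrix}\right]$ as before, $\det(H)=z^2-x^2-y^2$, so $x^2+y^2-z^2=0$, which is the equation of a cone in $\R^3$. This direction is essentially immediate from the determinant computation already carried out in the earlier propositions.

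For the converse I would argue as follows. If $H=xA+yB+zC$ is traceless with $\det(H)=0$, then its characteristic polynomial is $\rho_H(t)=t^2+\det(H)=t^2$, so $H$ is nilpotent with $H^2=0$. If $H=0$ we are in the case $\mathcal{O}_2^0$; otherwise $H$ has rank one and its real Jordan form is the single nilpotent block $\left[\begin{smallmatrix}0&1\\0&0\end{smallmatrix}\right]$, so there is $P\in\GL(2,\R)$ with $PHP^{-1}=\left[\begin{smallmatrix}0&1\\0&0\end{smallmatrix}\right]$. Unlike the diagonalizable case, here one cannot simply rescale one column of $P$ to fix the determinant without disturbing the conjugate; instead I would conjugate further by a diagonal matrix $\mathrm{diag}(a,a^{-1})\in\SL(2,\R)$, which sends $\left[\begin{smallmatrix}0&1\\0&0\end{smallmatrix}\right]$ to $\left[\begin{smallmatrix}0&a^2\\0&0\end{smallmatrix}\right]$, allowing the off-diagonal entry to be scaled by any positive factor, and if necessary conjugate by $\left[\begin{smallmatrix}0&1\\-1&0\end{smallmatrix}\right]\in\SL(2,\R)$ to flip its sign; combining these, $H$ is $\SL(2,\R)$-conjugate to $\left[\begin{smallmatrix}0&2\lambda\\0&0\end{smallmatrix}\right]=\lambda D$ for the appropriate choice of $\lambda$, hence $H\in\mathcal{O}(\lambda D)$.

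The main obstacle I anticipate is the bookkeeping in the converse direction: the scalar $\lambda$ is not a conjugation invariant of a nilpotent orbit (all nonzero nilpotents in $\mathfrak{sl}(2,\R)$ with the same sign of off-diagonal entry are conjugate), so the statement should really be read as saying $\mathcal{O}(\lambda D)$ is independent of $\lambda\neq 0$ and equals one of $\mathcal{O}_2^{+}$ or $\mathcal{O}_2^{-}$ (or $\{0\}$ when $\lambda=0$). I would want to track carefully which sheet $z>0$ or $z<0$ the matrix $\lambda D$ itself lands on — computing that $\lambda D = \lambda A + \lambda C$ corresponds to $x=\lambda$, $y=0$, $z=\lambda$, which does lie on the cone but on the boundary ray, so a short separate remark identifying $\mathcal{O}(\lambda D)$ with the punctured upper or lower half-cone (analogous to the $\mathcal{O}_1^{\pm}$ discussion via the sign obstruction $M(\lambda C)M^{-1}\neq -\lambda C$) is the delicate point. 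I expect the determinant half to be a one-line computation and the nilpotent normal-form half to be routine once the diagonal-and-swap trick for adjusting $\SL$-representatives is in place.
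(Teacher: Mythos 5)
Your overall strategy is the paper's: determinant invariance for the forward inclusion, then the nilpotent Jordan form plus $\SL$-versus-$\GL$ bookkeeping, with a sign discussion analogous to the two--sheeted case. However, your converse contains a false step: conjugation by $J=\left[\begin{smallmatrix}0&1\\-1&0\end{smallmatrix}\right]$ does \emph{not} flip the sign of the off-diagonal entry. A direct computation gives $J\left[\begin{smallmatrix}0&c\\0&0\end{smallmatrix}\right]J^{-1}=\left[\begin{smallmatrix}0&0\\-c&0\end{smallmatrix}\right]$, and in the coordinates $H=\left[\begin{smallmatrix}y&x+z\\x-z&-y\end{smallmatrix}\right]$ both of these matrices have $z=c/2$, i.e.\ they lie on the \emph{same} half-cone; the matrix you would need, $\left[\begin{smallmatrix}0&-c\\0&0\end{smallmatrix}\right]$, has $z=-c/2$. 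In fact no element of $\SL(2,\R)$ can flip this sign: for $M=\left[\begin{smallmatrix}u&v\\s&t\end{smallmatrix}\right]\in\SL(2,\R)$ one has $M(\lambda D)M^{-1}=\lambda\left[\begin{smallmatrix}-2us&2u^2\\-2s^2&2us\end{smallmatrix}\right]$, whose $z$-coordinate is $\lambda(u^2+s^2)$ with $(u,s)\neq(0,0)$, so the sign of $z$ is constant along the orbit. This is exactly the computation the paper uses, and it is the reason $\mathcal{O}_2^{+}$ and $\mathcal{O}_2^{-}$ are distinct orbits. Your proposal is therefore internally inconsistent: the sign-flip step would merge the two punctured half-cones into a single orbit, while your closing paragraph correctly insists that $\mathcal{O}(\lambda D)$ is only one of them.

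The repair is short and brings you back to the paper's argument. Given nonzero nilpotent $H$ and $P\in\GL(2,\R)$ with $PHP^{-1}=\left[\begin{smallmatrix}0&1\\0&0\end{smallmatrix}\right]$, split on the sign of $\det(P)$: if $\det(P)>0$, replace $P$ by $\frac{1}{\sqrt{\det(P)}}P$ (a scalar rescaling, which does not disturb the conjugate) to conclude $H\in\mathcal{O}_2^{+}$; if $\det(P)<0$, first compose with $\mathrm{diag}(1,-1)$, which conjugates $\left[\begin{smallmatrix}0&1\\0&0\end{smallmatrix}\right]$ to $\left[\begin{smallmatrix}0&-1\\0&0\end{smallmatrix}\right]$ and makes the determinant positive, then rescale, giving $H\in\mathcal{O}_2^{-}$. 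Your $\mathrm{diag}(a,a^{-1})$ trick is fine for adjusting the magnitude of the entry to $2|\lambda|$ \emph{within} a fixed half-cone, and together with the orbit-invariance of the sign of $z$ established above you recover the paper's conclusion: the cone decomposes into the three orbits $\mathcal{O}_2^{+}$, $\mathcal{O}_2^{-}$, $\mathcal{O}_2^{0}=\{0\}$, with $\lambda D\in\mathcal{O}_2^{+}$ for $\lambda>0$ and $\lambda D\in\mathcal{O}_2^{-}$ for $\lambda<0$, which is the precise content behind the loosely stated proposition.
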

\begin{proof}
If $H=xA+yB+zC$ belongs to $\mathcal{O}(\lambda D)$, 
we can write down $H=L\lambda DL^{-1}$ where $L\in\SL(2,\R)$. As before we get
\begin{align*}
\det(H)&=\det\left(\left[\begin{array}{cc}
y&x+z\\
x-z&-y	
\end{array}\right]\right)=z^2-x^2-y^2\, \textup{ and}\\
\det(\lambda D)&=\det\left(\left[\begin{array}{cc}
0&2\lambda\\
0&0
\end{array}\right]\right)=0.
\end{align*}
We conclude that if $H=xA+yB+zC$ belongs to $\mathcal{O}(\lambda D)$, then $x, y, z$ satisfy the relation
\begin{equation}
x^2+y^2-z^2=0.
\label{cone}
\end{equation}

Next we show that if $H=xA+yB+zC$ is such that $x, y, z$ satisfy (\ref{cone}), 
then $H$ belongs either to $\mathcal{O}_2^+$, $\mathcal{O}_2^-$ or $\mathcal{O}_2^0$. 
To prove this, we look again to the Jordan form of $H$. 
Now, once we know $\rho_H(t)=t^2$, we have two cases: whether $H=0$ or $H$ has as Jordan form
$$
\left[\begin{array}{cc}
0 & 1\\
0 &0	
\end{array}\right].
$$

If $H=0$, we have $H\in \mathcal{O}^{0}_{2}$. 
So assume $H\neq 0$ and let $P\in \mathrm{GL}(2,\mathbb{R})$ be such that
$$
PHP^{-1}=\left[\begin{array}{cc}
0 & 1\\
0 &0	
\end{array}\right].
$$
If $\det(P)>0$, define $\tilde{P}=\frac{1}{\sqrt{\det(P)}}P \in \mathrm{SL}(2, \mathbb{R})$ 
in order to obtain $H\in \mathcal{O}_2^+$.
On the other hand, if $\det(P)<0$, we can define $\tilde{P} \in \mathrm{SL}(2, \mathbb{R})$ 
as the matrix that we obtain from $P$ by multiplying its first column by $\displaystyle \frac{1}{\det(P)}$. 
And so, we reach
$$
\tilde{P}H\tilde{P}^{-1}=\left[\begin{array}{cc}
0 & \det(P)\\
0 &0	
\end{array}\right].
$$
Therefore, in this case we obtain $H\in \mathcal{O}_2^-$.
Note that if $\lambda>0$, then we get always $x+z>0$ and $x-z<0$, hence we have $z>0$ and therefore
\[\mathcal{O}_2^+=\{H=xA+yB+zC\in\mathfrak{sl}(2,\R)\colon x^2+y^2-z^2=0, z>0\}.\]
When $\lambda<0$, we have $x+z<0$ and $x-z>0$ so we get $z<0$ and thus
\[\mathcal{O}_2^-=\{H=xA+yB+zC\in\mathfrak{sl}(2,\R)\colon x^2+y^2-z^2=0, z<0\}.\]
Finally, for $\lambda=0$ we have 
\[\mathcal{O}_2^0=\{0\}.\]

Equation (\ref{cone}) defines a  \textbf{cone}.
We distinguish three situations; either $z>0$ (upper half of the cone), 
or $z<0$ (lower half), or else $z=0$ (origin); 
which are determined by three different orbits denoted by 
$\mathcal{O}_2^+$, $\mathcal{O}_2^-$ and $\mathcal{O}_2^0$, respectively. 
We claim that we have
\begin{itemize}
	\item $\lambda D\in\mathcal{O}_2^+$ if $\lambda>0$,
	\item $\lambda D\in\mathcal{O}_2^-$ if $\lambda<0$, and
	\item $\lambda D\in\mathcal{O}_2^0$ if $\lambda=0.$
\end{itemize} 
To see this, take $M\in \mathrm{SL}(n, \mathbb{R})$ and write
$$
M=\left[\begin{array}{cc}
u&v\\
s&t	
\end{array}\right].
$$
We have then 
$$
M(\lambda D)M^{-1}=\lambda\left[\begin{array}{cc}
-2su & 2u^2\\
-2s^2 &2us	
\end{array}\right].
$$
So, if $\lambda> 0$, we have $2\lambda u^2 \geq 0$; while if $u=0$, necessarily $s\neq 0$, 
otherwise must have $\det(M)=0$. 
So, we conclude that there are three exclusive orbits associated with Equation (\ref{cone}), 
depending on whether $\lambda$ is positive, negative  or zero.
\end{proof}

\begin{remark}
	Now it is trivial to see that these objects comprise the adjoint orbits of $\mathfrak{sl}(2,\R)$. 
In fact, given a non-zero matrix $H\in\mathfrak{sl}(2,\R)$ subject to $H=xA+yB+zC$, 
we have $x^2+y^2-z^2=\alpha\in\R.$ 
In this way, if $\alpha\in\R^+$ then $H\in\mathcal{O}(\lambda A)$. If $\alpha\in\R^-$ 
then $H\in\mathcal{O}_1^+$ or $H\in\mathcal{O}_1^-$, while for $\alpha=0$ 
we have $H\in\mathcal{O}_2^+$ or $H\in\mathcal{O}_2^-$. 
If $H$ is the zero matrix, of course we get $H\in\mathcal{O}_2^0$. 
Thus, every element in $\mathfrak{sl}(2,\R)$ is contained in one and only one of these orbits.
	\end{remark}
\begin{figure}[h]
	\centering      
	\includegraphics[width=0.8\textwidth]{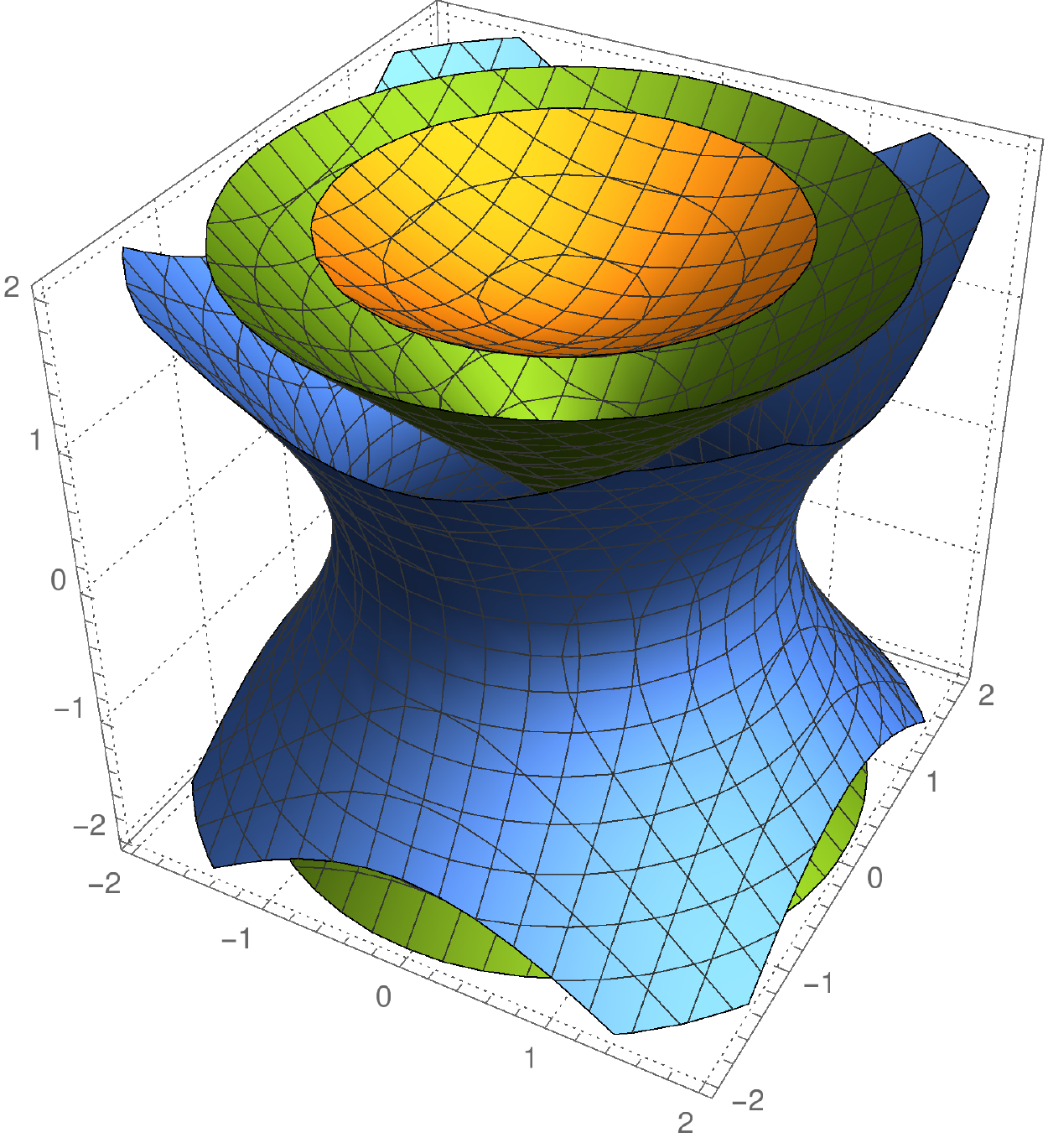}      
	\caption{Orbits of $\mathfrak{sl}(2,\R)$.}
	\label{fig:orbits}
\end{figure}

\section{The geometry of the one--sheeted hyperboloid}\label{section3.1}
Here we show that the one--sheeted hyperboloid is a ruled surface. 
Next we use this result to study the dynamics of a gradient field restricted to this surface.

Recall that a surface $S$ is called \textbf{ruled} if it is the union of a one 
parameter family of lines $\{r_{\alpha}\}_{\alpha \in \mathcal{A}}$. More precisely, there is a family of lines $\{r_{\alpha}\}_{\alpha \in \mathcal{A}}$ and a parametrization $r$ of $S$ satisfying the following properties. 

\begin{itemize}
    \item The parametrization $r$ is of the form $r(u, v)= c(u)+vb(u)$,  for a given $v \in \mathbb{R}$, where $c$ and $b$ are smooth functions.
	\item For each $u$, there is $\alpha_u\in \mathcal{A}$, such that $r_{u}(v)=r(u, v)$ is the parametric equation of the line $r_{\alpha_u}\in \{r_\alpha\}_{\alpha\in \mathcal{A}}$.
	
	\item The association $u\mapsto \alpha_u$ is a one to one correspondence.
\end{itemize} 
In this case, we say that $S$ is ruled by the family $\{r_\alpha\}_{\alpha \in \mathcal{A}}$.

Similarly, $S$ is called \textbf{doubly ruled} if it can be ruled in different ways by two disjoint families of lines.

For now on, denote by $S$ the one--sheeted hyperboloid given by the equation 
$x^2+y^2-z^2=\lambda^2$, with fixed $\lambda\neq0$. 
In order to show that $S$ is doubly ruled, we will construct explicitly such families.

\begin{lemma}
	Let $S$ be the one--sheeted hyperboloid given by equation $x^2+y^2-z^2=\lambda^2$, with $\lambda\neq0
	$. There exist two disjoint families of lines $F_1, F_2$ contained in $S$.
	\label{lemma2}
\end{lemma}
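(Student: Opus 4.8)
The standard parametrization of the one-sheeted hyperboloid $x^2+y^2-z^2=\lambda^2$ is $(x,y,z)=(\lambda\cos\theta-t\sin\theta,\ \lambda\sin\theta+t\cos\theta,\ t)$ for $\theta\in[0,2\pi)$ and $t\in\mathbb R$; for each fixed $\theta$ this is a line $r_\theta(t)=c(\theta)+t\,b(\theta)$ with $c(\theta)=(\lambda\cos\theta,\lambda\sin\theta,0)$ sitting on the waist circle and direction vector $b(\theta)=(-\sin\theta,\cos\theta,1)$. First I would verify directly that this lands on $S$: plug in and compute $(\lambda\cos\theta-t\sin\theta)^2+(\lambda\sin\theta+t\cos\theta)^2-t^2$, where the cross terms $-2\lambda t\cos\theta\sin\theta$ and $+2\lambda t\sin\theta\cos\theta$ cancel and the $t^2$ terms cancel, leaving $\lambda^2$. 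This gives the first family $F_1=\{r_\theta:\theta\in[0,2\pi)\}$. For the second family $F_2$ I take the mirrored direction vector $b'(\theta)=(-\sin\theta,\cos\theta,-1)$, i.e. lines $s_\theta(t)=c(\theta)+t\,b'(\theta)$ with the same waist circle of base points; the same cancellation shows $s_\theta(t)\in S$.

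**Second, I would check these really are legitimate rulings in the sense of the definition above**, i.e. that $c,b$ are smooth (clear, they are trigonometric), that the map $\theta\mapsto r_\theta$ (resp. $\theta\mapsto s_\theta$) is a bijection onto its family (by construction, since distinct $\theta$ give distinct base points on the waist circle), and that the union of the lines covers $S$: given $(x,y,z)\in S$, set $t=z$ and solve $\lambda\cos\theta=x+z\sin\theta$, $\lambda\sin\theta=y-z\cos\theta$ — equivalently choose $\theta$ with $(\lambda\cos\theta,\lambda\sin\theta)$ a prescribed point on the radius-$\lambda$ circle; since $(x+z\sin\theta)^2+(y-z\cos\theta)^2$ must equal $\lambda^2$ by the surface equation, such a $\theta$ exists. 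So $S$ is ruled by each of $F_1$ and $F_2$.

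**Third, I must show $F_1$ and $F_2$ are disjoint as families of lines.** Here "disjoint" should mean no line of $F_1$ coincides with a line of $F_2$. This is where the one genuine point of care lies: two lines through points $c(\theta),c(\theta')$ on the waist circle with directions $b(\theta),b'(\theta')$ coincide only if $b(\theta)\parallel b'(\theta')$ and $c(\theta)-c(\theta')\parallel b(\theta)$. Comparing third coordinates, $b(\theta)=(-\sin\theta,\cos\theta,1)$ and $b'(\theta')=(-\sin\theta',\cos\theta',-1)$ are parallel only if the scalar is $-1$, forcing $\sin\theta=\sin\theta'$ and $\cos\theta=\cos\theta'$, i.e. $\theta=\theta'$; but then the lines through the same point $c(\theta)$ with directions $(-\sin\theta,\cos\theta,1)$ and $(-\sin\theta,\cos\theta,-1)$ are distinct (these directions are not parallel), so no line lies in both families. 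I expect this disjointness verification to be the main obstacle, though it is still elementary — it just requires being careful that the two direction fields never become proportional. I would conclude by recording the two families and noting that $S$ is therefore doubly ruled, which is the content of Lemma~\ref{lemma2} and sets up the explicit tangent bundle description that follows.
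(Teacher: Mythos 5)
Your route is genuinely different from the paper's: you exhibit both rulings at once via the rotational parametrization $r_\theta(t)=c(\theta)+t\,b(\theta)$, $s_\theta(t)=c(\theta)+t\,b'(\theta)$ with base points on the waist circle, whereas the paper obtains the same lines pointwise, by intersecting the tangent plane to the cylinder $x^2+y^2=\lambda^2$ at $(x_0,y_0,0)$ with $S$, splitting into the cases $y_0>0$, $y_0<0$, $y_0=0$, and only later relating the pieces by rotations about the $z$--axis. Your version avoids that case analysis and the square roots $\sqrt{\lambda^2-x_0^2}$, and your containment computation is clean and correct. Note also that the lemma only asks for two disjoint families of lines contained in $S$; the covering (ruling) property you verify is the content of a later proposition, so it is welcome but not required here.

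There is, however, a concrete error in your disjointness step. With $b(\theta)=(-\sin\theta,\cos\theta,1)$ and $b'(\theta')=(-\sin\theta',\cos\theta',-1)$, proportionality forces the scalar $-1$ and hence $\sin\theta'=-\sin\theta$, $\cos\theta'=-\cos\theta$, i.e.\ $\theta'=\theta+\pi$, not $\theta=\theta'$. So parallel pairs of directions do occur, at antipodal base points, and your argument as written never treats them: you stated the right criterion (directions parallel \emph{and} $c(\theta)-c(\theta')$ parallel to $b(\theta)$) but only used its first half. The repair is immediate: for $\theta'=\theta+\pi$ one has $c(\theta)-c(\theta')=(2\lambda\cos\theta,\,2\lambda\sin\theta,\,0)$, a nonzero horizontal vector, while $b(\theta)$ has third coordinate $1$, so the second condition fails and $r_\theta$, $s_{\theta+\pi}$ are parallel but distinct lines. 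With that one correction your proof is complete, and arguably tidier than the paper's, whose own disjointness justification (``they come from different linearly independent systems of equations'') is rather loose.
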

\begin{proof}
Consider the cylinder $C\colon x^2+y^2=\lambda^2$ which intersect $S$ in the plane $z=0$. 
Let $(x_0, y_0)$ be a point in the circle $C\cap S$. Think of
$y=f(x)=\pm\sqrt{\lambda^2-x^2}$ as describing the cylinder. 
Notice that the tangent space to $C$ at $(x_0, y_0)$ is given by
\[y-y_0=\frac{\partial f}{\partial x}(x_0)(x-x_0)+\frac{\partial f}{\partial z}(x_0)(z-0).\]
In particular for $y_0>0$, we get
\begin{equation}
y+\left(\frac{x_0}{\sqrt{\lambda^2-x_0^2}}\right)x=\frac{\lambda^2}{\sqrt{\lambda^2-x_0^2}},
\label{eq:4}
\end{equation}
while, for $y_0<0$ the equation is
\begin{equation}
y-\left(\frac{x_0}{\sqrt{\lambda^2-x_0^2}}\right)x=\frac{-\lambda^2}{\sqrt{\lambda^2-x_0^2}}.
\label{eq:5}
\end{equation}
Let us analyze each case separately.

\textbf{Case $y_0>0$.} We describe the intersection of the tangent space with $S$. Rewriting Equation (\ref{eq:4}) as

\begin{equation*}
y=\frac{\lambda^2}{\sqrt{\lambda^2-x_0^2}}-\left(\frac{x_0}{\sqrt{\lambda^2-x_0^2}}\right)x,
\end{equation*}
squaring both sides, and substituting $y^2$ by $\lambda^2+z^2-x^2$, we find
$$
(x-x_0)^2=\left(\frac{\lambda^2-x_0^2}{\lambda^2}\right)z^2.
$$
The above equation gives two planes containing $(x_0, y_0, 0)$, namely
	\begin{equation*}
	\left\{ \begin{array}{l}
	 x-\left(\frac{\sqrt{\lambda^2-x_0^2}}{\lambda}\right)z= x_0\\
	\\ 	x+\left(\frac{\sqrt{\lambda^2-x_0^2}}{\lambda}\right)z= x_0.
	\end{array}
	\right.
	\end{equation*}
Once again, considering the intersection with the plane (\ref{eq:4}), we get two sets of systems of equations
	\begin{equation}
	\left\{ \begin{array}{l}
	y-\left(\frac{x_0}{\sqrt{\lambda^2-x_0^2}}\right)x=\frac{-\lambda^2}{\sqrt{\lambda^2-x_0^2}}\\
	\\ x-\left(\frac{\sqrt{\lambda^2-x_0^2}}{\lambda}\right)z= x_0,
	\end{array}
	\right.
	\label{eq:6}
	\end{equation}
	\begin{equation}
	\left\{ \begin{array}{l}
	y-\left(\frac{x_0}{\sqrt{\lambda^2-x_0^2}}\right)x=\frac{-\lambda^2}{\sqrt{\lambda^2-x_0^2}}\\
	\\ x+\left(\frac{\sqrt{\lambda^2-x_0^2}}{\lambda}\right)z= x_0.
	\end{array}
	\right.
	\label{eq:7}
	\end{equation}

	Let us find the line determined by the planes in Equation (\ref{eq:6}). 
Note that $v_1=\left(\frac{x_0}{\sqrt{\lambda^2-x_0^2}}, 1, 0 \right)$ and 
$v_2=\left(1, 0, \frac{-\sqrt{\lambda^2-x_0^2}}{\lambda}\right)$ are normal vectors to the planes. 
We need the explicit value
	\[v_1\times v_2 = \left(\frac{-\sqrt{\lambda^2-x_0^2}}{\lambda}, \frac{x_0}{\lambda}, -1\right).\]
In this way, the parametric equation of the intersection line determined by (\ref{eq:6}) is
	\[r_1(t)=(x_0, y_0, 0)+t\left( \displaystyle\frac{-\sqrt{\lambda^2-x_0^2}}{\lambda}, \frac{x_0}{\lambda}, -1 \right).\]
	We check now why $r_1$ is contained in $S$. 
Using $y_0=\sqrt{\lambda^2-x_0^2}$, we get
	\[\left(x_0-\frac{t\sqrt{\lambda^2-x_0^2}}{\lambda}\right)^2+	\left(y_0+\frac{tx_0}{\lambda}\right)^2-t^2=\lambda^2.\]
	
Similarly, fashion the line determined by the planes in Equation (\ref{eq:7}) is
	\[r_2(t)=(x_0, y_0, 0)+t\left(\frac{\sqrt{\lambda^2-x_0^2}}{\lambda}, \frac{-x_0}{\lambda}, -1\right).\]
 Since $y_0=\sqrt{\lambda^2-x_0^2}$, for every $t\in\mathbb{R}$, we obtain
	\[\left(x_0+\frac{t\sqrt{\lambda^2-x_0^2}}{\lambda}\right)^2+	\left(y_0+\frac{tx_0}{\lambda}\right)^2-t^2=\lambda^2,\]
	and hence $r_2$ is contained in $S$.

	\textbf{Case $y_0<0$.} Substituting Equation (\ref{eq:5}) in $x^2+y^2-z^2=\lambda^2$, we reach
	\begin{equation}
	\left\{ \begin{array}{l}
	y-\left(\frac{x_0}{\sqrt{\lambda^2-x_0^2}}\right)x=\frac{-\lambda^2}{\sqrt{\lambda^2-x_0^2}}\\
	\\ x-\left(\frac{\sqrt{\lambda^2-x_0^2}}{\lambda}\right)z= x_0,
	\end{array}
	\right.
	\label{eq:8}
	\end{equation}
	\begin{equation}
	\left\{ \begin{array}{l}
    y-\left(\frac{x_0}{\sqrt{\lambda^2-x_0^2}}\right)x=\frac{-\lambda^2}{\sqrt{\lambda^2-x_0^2}}\\
	\\ x+\left(\frac{\sqrt{\lambda^2-x_0^2}}{\lambda}\right)z= x_0.
	\end{array}
	\right.
	\label{eq:9}
	\end{equation}
	
	Working as in the previous case, the intersection plane is
	\[s_1(t)=(x_0, y_0, 0)+t\left( \displaystyle\frac{-\sqrt{\lambda^2-x_0^2}}{\lambda}, \frac{-x_0}{\lambda}, -1 \right),\]
	which shows that $s_1$ is contained in $S$. 
	
Equation (\ref{eq:9}) yields
	\[s_2(t)=(x_0, y_0, 0)+t\left(\frac{\sqrt{\lambda^2-x_0^2}}{\lambda}, \frac{x_0}{\lambda}, -1\right),\]
	also contained in $S$.

	For $(\lambda, 0, 0)$ and $(-\lambda, 0, 0)$, namely the point when $y_0=0$, 
the tangent spaces are given by the equations $x=\lambda$ and $x=-\lambda$, respectively. 
When $x=\lambda$ both
	\begin{equation}
	\begin{aligned}
	l_1(t)=&(\lambda, 0, 0)+t(0, 1, -1),\\
	l_2(t)=&(\lambda, 0, 0)+t(0, -1, -1).
	\end{aligned}
	\label{eq:10}
	\end{equation}
are contained in $S$.

	For $x=-\lambda$, the same is true for
		\begin{equation}
			\begin{aligned}
	l'_1(t)=&(-\lambda, 0, 0)+t(0, -1, -1),\\
	l'_2(t)=&(-\lambda, 0, 0)+t(0, 1, -1).
	\end{aligned}
	\label{eq:11}
	\end{equation}
	
	Observe that we can equally well get the lines from Equation (\ref{eq:9}) by a  rotation of $\pi$ 
radians of the lines obtained in Equation (\ref{eq:6}) around $z$--axis, 
which is to be expected since we are looking at diametrically opposite points in the cylinder. 
In fact, rotating $r_1(t)$ we achieve
	\[\left[\begin{matrix}
	\cos\pi&-\sin\pi&0\\
	\sin\pi&\cos\pi&0\\
	0&0&1
	\end{matrix}\right]\left[\begin{matrix}
	x_0-\frac{t\sqrt{\lambda^2-x_0^2}}{\lambda}\\
	y_0+\frac{tx_0}{\lambda}\\
	-t
	\end{matrix}\right]=\left[\begin{matrix}
	-x_0+\frac{t\sqrt{\lambda^2-x_0^2}}{\lambda}\\
	-y_0-\frac{tx_0}{\lambda}\\
	-t
	\end{matrix}\right],
		\]
		which is exactly $s_2(t)$. For $r_2(t)$ rotated by $\pi$ radians around of $z$--axis we get
		$$\left[\begin{matrix}
		\cos\pi&-\sin\pi&0\\
		\sin\pi&\cos\pi&0\\
		0&0&1
		\end{matrix}\right]\left[\begin{matrix}
		x_0+\frac{t\sqrt{\lambda^2-x_0^2}}{\lambda}\\
		y_0-\frac{tx_0}{\lambda}\\
		-t
		\end{matrix}\right]=\left[\begin{matrix}
		-x_0-\frac{t\sqrt{\lambda^2-x_0^2}}{\lambda}\\
		-y_0+\frac{tx_0}{\lambda}\\
		-t
		\end{matrix}\right],
		$$
		exactly $s_1(t)$. By the same argument, we see that 
$l'_1(t)$ is a rotation of $l_1(t)$ and  $l'_2(t)$, a rotation of $l_2(t)$.
	\\
	
	Let us define $F_1$ as the union of the lines obtained from (\ref{eq:6}) 
and (\ref{eq:9}) together with $l_1(t)$ and $l'_1(t)$. 
Similarly, let $F_2$ be the union of the families of the lines obtained from (\ref{eq:7}) 
together with (\ref{eq:8}), this time appending $l_2(t)$ and $l'_2(t)$. 
These families are disjoint since they come from different linearly independent systems of equations.
\end{proof}

\begin{figure}[h]
	\centering      
	\includegraphics[width=0.7\textwidth]{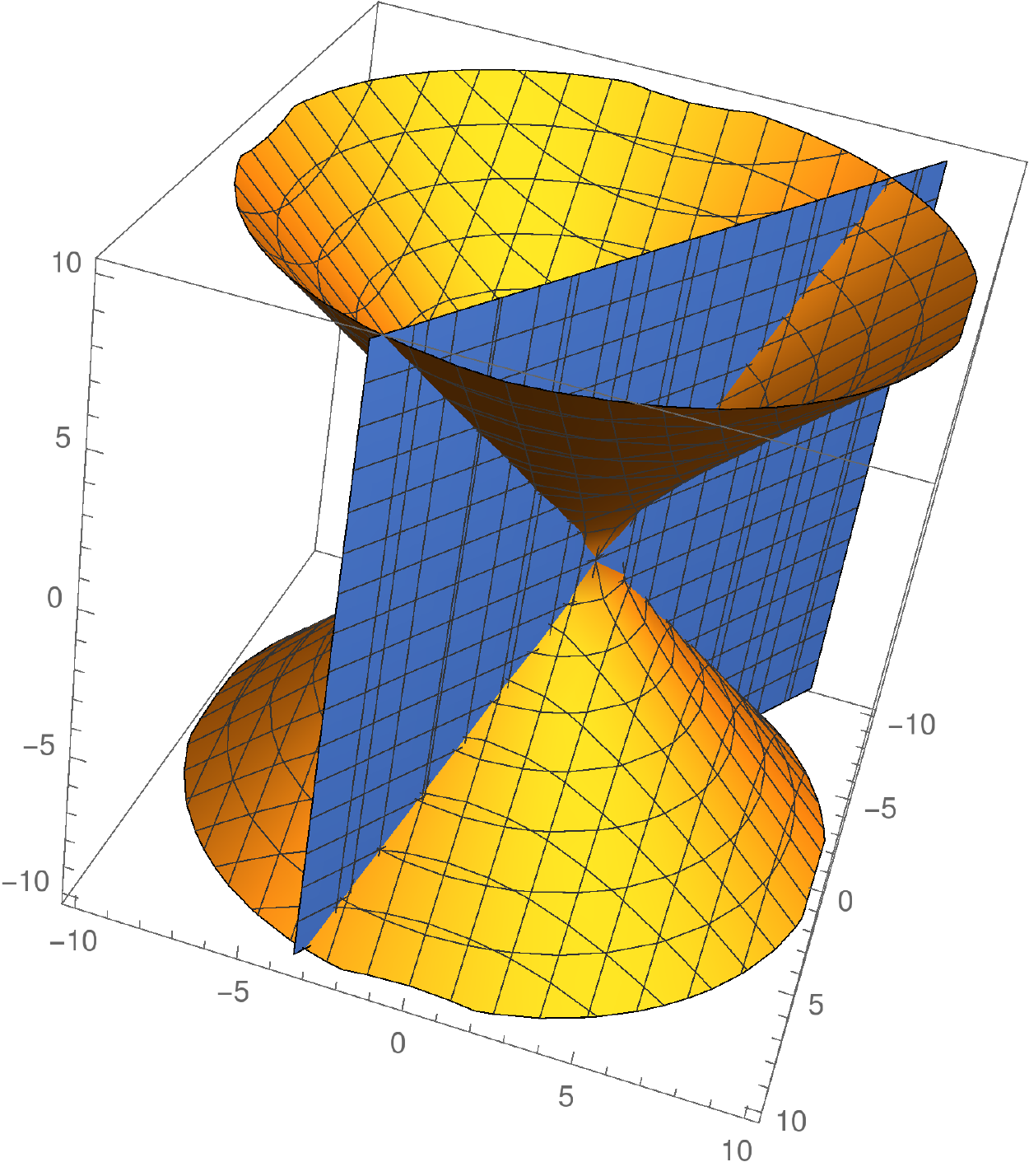}      
	\caption{The intersection of the plane in (\ref{eq:4}) with the one--sheeted hyperboloid.}
	\label{fig:intersection}
\end{figure}

\begin{proposition}
The families $F_i$, for $i=1,2$, satisfy the following properties.

	$\bullet$ \label{exist-rot} For any two lines $a, b\in F_i$, there exists a rotation $R_{\theta}$ 
around the $z$--axis such that $R_\theta a=b$.

	$\bullet$ If $a\in F_i$ and $b$ is such that there exists a rotation 
$R_\theta$ around the $z$--axis such that $R_\theta a=b$, then $b\in F_i$.
\end{proposition}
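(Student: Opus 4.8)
The plan is to show that each family $F_i$ is a single orbit of lines under the group of rotations about the $z$-axis, after which both bullets become immediate. First I would reparametrize the circle $C\cap S$ by angle, writing $x_0=\lambda\cos\phi$ and $y_0=\lambda\sin\phi$, and rewrite every line produced in Lemma \ref{lemma2} in terms of $\phi$. The only subtlety is the sign of the radical: for $y_0>0$ one has $\sqrt{\lambda^2-x_0^2}=\lambda\sin\phi$, while for $y_0<0$ one has $\sqrt{\lambda^2-x_0^2}=-\lambda\sin\phi$. Substituting these into the formulas for $r_1$ and $s_2$ (which constitute $F_1$ apart from the exceptional lines) and checking that $l_1,l_1'$ correspond to $\phi=0,\pi$, one obtains
\[
F_1=\{\ell_\phi:\phi\in\mathbb{R}/2\pi\mathbb{Z}\},\qquad \ell_\phi(t)=(\lambda\cos\phi,\ \lambda\sin\phi,\ 0)+t(-\sin\phi,\ \cos\phi,\ -1).
\]
Doing the same with $r_2$, $s_1$ and $l_2,l_2'$ gives
\[
F_2=\{m_\phi:\phi\in\mathbb{R}/2\pi\mathbb{Z}\},\qquad m_\phi(t)=(\lambda\cos\phi,\ \lambda\sin\phi,\ 0)+t(\sin\phi,\ -\cos\phi,\ -1).
\]
Since the direction vectors have third coordinate $-1$, the line $\ell_\phi$ (resp. $m_\phi$) meets the plane $z=0$ in the single point $(\lambda\cos\phi,\lambda\sin\phi,0)$, so the assignments $\phi\mapsto\ell_\phi$ and $\phi\mapsto m_\phi$ are bijections onto $F_1$ and $F_2$ respectively.

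Next I would compute the effect of a rotation $R_\theta$ about the $z$-axis directly from the rotation matrix, using the addition formulas for sine and cosine, to get $R_\theta\ell_\phi(t)=\ell_{\phi+\theta}(t)$ and $R_\theta m_\phi(t)=m_{\phi+\theta}(t)$ for all $t$. In particular $R_\theta$ maps $F_i$ onto $F_i$.

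With these two facts both bullets are one line each. For the first, given $a,b\in F_1$ write $a=\ell_\phi$ and $b=\ell_\psi$ and take $\theta=\psi-\phi$; then $R_\theta a=\ell_{\phi+\theta}=\ell_\psi=b$, and the argument for $F_2$ is identical with $m$ replacing $\ell$. For the second, if $a=\ell_\phi\in F_1$ and $b=R_\theta a$ for some rotation $R_\theta$ about the $z$-axis, then $b=\ell_{\phi+\theta}$, which belongs to $F_1$ by the description above; the case of $F_2$ is the same.

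There is no substantive obstacle here: the whole content is the recognition that the lines of Lemma \ref{lemma2} assemble into the two orbits $\{\ell_\phi\}$ and $\{m_\phi\}$. The only step that demands care is the first one — reconciling the case-split formulas of Lemma \ref{lemma2} ($y_0>0$ versus $y_0<0$, plus the three exceptional lines at $y_0=0$) with the single trigonometric family, and in particular choosing the correct sign of $\sqrt{\lambda^2-x_0^2}$ in each regime so that the resulting parametrizations coincide.
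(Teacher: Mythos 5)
Your proof is correct and follows essentially the same route as the paper: the paper likewise reduces everything to the computation that $R_\theta$ applied to the reference line $l_1$ (resp. $l_2$) reproduces, after reconciling the sign of $\sqrt{\lambda^2-x_0^2}$ with the sign of $y_0$, exactly the lines of $F_1$ (resp. $F_2$), which is your identification $F_1=\{R_\phi l_1\}$, $F_2=\{R_\phi l_2\}$. Your packaging as a single trigonometric family $\ell_\phi$ with $R_\theta\ell_\phi=\ell_{\phi+\theta}$ is just a tidier way of stating the same orbit description, so both bullets follow in the same manner as in the paper.
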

\begin{proof}
	We prove the proposition just for $F_1$, the case $F_2$ is completely analogous.

By Lemma \ref{lemma2}, if $a, b\in F_1$, then the lines pass through 
		a point $(x_0, y_0, 0)\in S$, so they have the shape
\begin{align*}
r_1(t)&=(x_0, y_0, 0)+t\left(-\frac{\sqrt{\lambda^2-x_0^2}}{\lambda},\frac{x_0}{\lambda}, -1\right),& y_0>0,\\
s_2(t)&=(x_0, y_0, 0)+t\left(\frac{\sqrt{\lambda^2-x_0^2}}{\lambda},\frac{x_0}{\lambda}, -1\right),& y_0<0.
\end{align*}
Note that it is enough to show that for each $a\in F_1$ there exists a rotation 
$R_\theta$ such that $R_\theta l_1=a$. This is so because $R_\phi l_1=b$ implies 
$R_\phi R_\theta^{-1}a=b.$ The case $a=l_2'$ was the content of Lemma \ref{lemma2}, 
so we suppose $a\neq l_2'$. Direct calculation yields then
\begin{equation}
\begin{bmatrix}
		\cos\theta & -\sin\theta & 0\\
		\sin\theta & \quad\cos\theta & 0\\
		0&0&1
		\end{bmatrix}\begin{bmatrix}
		\lambda\\
		t\\
		-t
		\end{bmatrix}=\begin{bmatrix}
		\lambda\cos\theta  -t\sin\theta \\
		\lambda\sin\theta +t\cos\theta\\
		-t
		\end{bmatrix}.
		\label{eq:12}
		\end{equation}
If $a$ passes through $(x, y, 0)\in S$, we choose $\theta$ so that 
$x=\lambda\cos\theta$ and $y=\lambda\sin\theta$ (this is possible given that for each $(x_0 , y_0, 0)\in S$ 
we have $x_0^2+y_0^2=\lambda^2$). By using $y=\sqrt{\lambda^2-x^2}$ whenever $y>0$, we get
$$\begin{bmatrix}
x+\frac{t\sqrt{\lambda^2-x^2}}{\lambda}\\
y+\frac{tx_0}{\lambda}\\
-t
\end{bmatrix};$$
while for $y<0$, we use $y=-\sqrt{\lambda^2-x^2}$ and obtain
$$\begin{bmatrix}
x-\frac{t\sqrt{\lambda^2-x^2}}{\lambda}\\
y+\frac{tx_0}{\lambda}\\
-t
\end{bmatrix}.$$

To verify the second statement it is enough to show that if there exists a rotation 
$R_\theta$ such that $R_\theta l_1=b$, then $b\in F_1$, since by Item \ref{exist-rot} 
there exists $R_\phi$ such that $R_\phi l_1=a$. 
If this is so, there exists $\alpha$ such that $R_\alpha a=b$ if and only if there exists 
$\phi$ such that $R_\phi R_\alpha l_1=b$.
By changing variables on Equation (\ref{eq:12}), with $x_0=\lambda\cos\theta$ and $y_0=\lambda\sin\theta$, we reach
$$\begin{bmatrix}
x_0\pm\frac{t\sqrt{\lambda^2-x^2}}{\lambda}\\
y_0+\frac{tx_0}{\lambda}\\
-t
\end{bmatrix};$$
which is exactly the expression of the lines given by the planes (\ref{eq:6}) and  (\ref{eq:9}).  
Finally, by definition of $F_1$, we have $R_\theta l_1\in F_1$.
\end{proof}
\begin{proposition}
The one--sheeted hyperboloid $S$ is a doubly ruled surface.
\end{proposition}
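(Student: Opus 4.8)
The plan is to assemble the three ingredients already established: Lemma \ref{lemma2}, which produces two disjoint families of lines $F_1$ and $F_2$ lying on $S$, and the preceding Proposition, which shows that each $F_i$ is exactly a rotation-orbit (under rotations about the $z$-axis) of a single reference line. What remains is to verify that each $F_i$ genuinely \emph{rules} $S$ in the technical sense recalled before the lemma, i.e.\ that it admits a parametrization $r(u,v)=c(u)+vb(u)$ with the line through parameter $u$ being $r_{\alpha_u}$ and $u\mapsto\alpha_u$ a bijection. So the proof has two short movements: (1) exhibit the ruled parametrization for each family, and (2) check that the two rulings cover all of $S$ and are distinct, so that $S$ is \emph{doubly} ruled.

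For step (1), I would use the rotation angle $\theta$ as the curve parameter. Take $c(\theta)=(\lambda\cos\theta,\lambda\sin\theta,0)$, the circle $C\cap S$ in the plane $z=0$; this is smooth and $2\pi$-periodic. For $F_1$, set
\[
b_1(\theta)=\left(-\sin\theta,\ \cos\theta,\ -1\right),
\]
and for $F_2$, set
\[
b_2(\theta)=\left(\sin\theta,\ -\cos\theta,\ -1\right),
\]
these being the direction vectors appearing in Lemma \ref{lemma2} once one substitutes $x_0=\lambda\cos\theta$, $y_0=\lambda\sin\theta$ (so $\sqrt{\lambda^2-x_0^2}=|\,\lambda\sin\theta\,|$, the sign bookkeeping being exactly the $y_0>0$ versus $y_0<0$ split, together with the endpoint lines $l_1,l_1',l_2,l_2'$ at $\theta=0,\pi$). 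Then $r_i(\theta,v)=c(\theta)+v\,b_i(\theta)$ is smooth, and a direct substitution into $x^2+y^2-z^2$ gives $\lambda^2\cos^2\theta+\lambda^2\sin^2\theta - 0 + 2v(\dots)+v^2(\sin^2\theta+\cos^2\theta-1)=\lambda^2$, where the cross term $2v(-\lambda\cos\theta\sin\theta+\lambda\sin\theta\cos\theta)$ vanishes identically; hence every such line lies on $S$. The map $\theta\mapsto$ (the line $r_i(\theta,\cdot)$) is the bijection required: two distinct values of $\theta\in[0,2\pi)$ give lines through distinct points $c(\theta)$ of the circle, and by the preceding Proposition this association is precisely the identification of $F_i$ with the rotation orbit, hence one-to-one and onto $F_i$.

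For step (2), I would argue that $F_1$ alone already rules $S$: given an arbitrary point $p=(x,y,z)\in S$, the point $p-z\,b_1(\theta)$ must land on the circle $C\cap S$ for a suitable choice of $\theta$ determined by $(x,y)$ (after the harmless rescaling $v=-z$ in the parametrization), so $p$ lies on the line $r_1(\theta,\cdot)$; concretely one solves $x=\lambda\cos\theta - z\sin\theta$, $y=\lambda\sin\theta + z\cos\theta$, which is solvable because $(x,y)$ is obtained from $(\lambda, z)$ by the rotation $R_\theta$ and $x^2+y^2=\lambda^2+z^2$. The same works for $F_2$. Finally, the two rulings are different: $F_1$ and $F_2$ are disjoint as families of lines (this is asserted at the end of Lemma \ref{lemma2}, the systems being linearly independent), and indeed through the point $(\lambda,0,0)$ the family $F_1$ contributes the line with direction $(0,1,-1)$ while $F_2$ contributes $(0,-1,-1)$, which are not parallel. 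Therefore $S$ can be ruled in two genuinely different ways, and by definition $S$ is a doubly ruled surface.

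The main obstacle I anticipate is purely bookkeeping rather than conceptual: reconciling the case split $y_0>0$ / $y_0<0$ / $y_0=0$ in Lemma \ref{lemma2} with a single smooth $b_i(\theta)$ defined for all $\theta$. The resolution is to note that the square roots $\sqrt{\lambda^2-x_0^2}$ in the lemma are really $|\lambda\sin\theta|$, and the sign flips between Equations (\ref{eq:6})--(\ref{eq:7}) and (\ref{eq:8})--(\ref{eq:9}) are exactly compensated by the sign of $\sin\theta$, so the seemingly piecewise data glue into the smooth vector fields $b_1,b_2$ above; the limiting lines $l_1,l_1',l_2,l_2'$ at $\theta=0,\pi$ are then just the values $b_i(0)$ and $b_i(\pi)$, confirming smoothness across those points. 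Once this identification is made, all three bullet conditions in the definition of a ruled surface are immediate, and the double ruling follows.
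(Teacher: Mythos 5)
Your proof is correct and takes essentially the same route as the paper: your parametrization $c(\theta)+v\,b_i(\theta)$ is exactly the paper's $R_\theta l_i(t)$, and you obtain surjectivity, as the paper does, by setting $v=-z$ and solving the resulting circle equations (modulo a harmless sign slip: with $v=-z$ the system is $x=\lambda\cos\theta+z\sin\theta$, $y=\lambda\sin\theta-z\cos\theta$, still solvable since $x^2+y^2=\lambda^2+z^2$). The extra care you take in gluing the $y_0>0$, $y_0<0$, $y_0=0$ cases into one smooth direction field is what the paper handles implicitly through its rotation propositions, so the two arguments coincide in substance.
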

\begin{proof}
	Fix $(x_0, y_0, z_0)\in S$. We look again at the line $l_1\in F_1$, where $l_1(t)=(\lambda, 0, 0)+t(0, 1, -1)$. 
We will show that there exists a rotation $R_\theta$ around $z$--axis such that 
$R_\theta l_1(t)=(x_0, y_0, z_0)$ for some $t\in\R$ 
(observe that by the second item in the above proposition, we already have $R_\theta l_1\in F_1$). 
By Equation (\ref{eq:12}), we get
	\[R_\theta l_1(t)=(\lambda\cos\theta-t\sin\theta, \lambda\sin\theta+t\cos\theta, -t),\]
	and by letting $t=-z_0$, we obtain
	\begin{equation}
	R_\theta l_1(-z_0)=(\lambda\cos\theta+z_0\sin\theta, \lambda\sin\theta-z_0\cos\theta, z_0).\label{eq:13}\end{equation}
	Varying $\theta$ in Equation (\ref{eq:13}) ables us to trace the entire level curve $S$ at $z=z_0$, 
which  is a circle of radius $\lambda^2-z_0^2$. 
Therefore, since $(x_0, y_0, z_0)\in S$ holds, there exist $\theta$ subject to 
$\lambda\cos\theta+z_0\sin\theta=x_0$ and $\lambda\sin\theta-z_0\cos\theta=y_0$. 
Thus  $R_\theta l_1$ is a line in $F_1$ subject to $(x_0, y_0, z_0)\in R_\theta l_1$.
In the same way, it is not hard to show that there exists a rotation 
$R_\phi$ such that $R_\phi l_2$ contains the point $(x_0, y_0, z_0)$. 
We conclude that $r(\theta, t)=R_{\theta}l_{1}(t)$ and $s(\theta, t)=R_{\theta}l_{2}(t)$ 
are parametric equations for $S$. Hence, $S$ is ruled by both $F_1$ and $F_2$.
\end{proof}

\section{The tangent spaces to $\mathcal{O}(\lambda A)$}
\label{section5}
The adjoint orbit $\mathcal{O}(\lambda A)$ is a surface in $\R^3$. 
The goal of this section is to depict the tangent space to  $\mathcal{O}(\lambda A)$ 
and determine its relation with the image of the adjoint representation of the Lie algebra $\mathfrak{sl}(2,\R)$.
Our starting point is the following proposition, 
which provides an identification of said tangent space.
\begin{proposition} Let $\ad$ be the adjoint representation of $\mathfrak{g}$. For $H\in\mathcal{O}(\lambda A)$  
we have
	$$\mathrm{Im}(\ad(H))= T_H\mathcal{O}(\lambda A).$$
\end{proposition}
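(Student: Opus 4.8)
The plan is to combine the standard description of the tangent space to a group orbit as the image of the infinitesimal action with a dimension count.

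\noindent\textbf{Step 1: the orbit map and its differential.} Since $H\in\mathcal O(\lambda A)$ we have $\mathcal O(H)=\mathcal O(\lambda A)$, so I would consider the smooth map
\[
\phi\colon \SL(2,\R)\longrightarrow \mathcal O(\lambda A),\qquad \phi(g)=gHg^{-1}.
\]
Differentiating along the one--parameter subgroup $t\mapsto\exp(tX)$ for $X\in\mathfrak{sl}(2,\R)=T_e\SL(2,\R)$ gives, by the product rule for $\exp(tX)\,H\,\exp(-tX)$,
\[
d\phi_e(X)=\left.\frac{d}{dt}\right|_{t=0}\exp(tX)\,H\,\exp(-tX)=XH-HX=-[H,X]=-\ad(H)(X).
\]
Because $\phi$ takes values in $\mathcal O(\lambda A)$, its differential at $e$ lands in the tangent space, so $\mathrm{Im}(d\phi_e)\subseteq T_H\mathcal O(\lambda A)$; and since $\mathrm{Im}(-\ad(H))=\mathrm{Im}(\ad(H))$, this already yields the inclusion $\mathrm{Im}(\ad(H))\subseteq T_H\mathcal O(\lambda A)$.

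\noindent\textbf{Step 2: equality by dimensions.} It remains to see the two subspaces have the same dimension. By the characterization of $\mathcal O(\lambda A)$ established above, $\mathcal O(\lambda A)=\{x^2+y^2-z^2=\lambda^2\}$, and since $\lambda\neq0$ the gradient $(2x,2y,-2z)$ never vanishes on it, so $\mathcal O(\lambda A)$ is a smooth surface and $\dim T_H\mathcal O(\lambda A)=2$. On the other side, $\ker\ad(H)$ is the centralizer $\{X\in\mathfrak{sl}(2,\R):[H,X]=0\}$. As $H$ lies in $\mathcal O(\lambda A)$, it is conjugate to $\mathrm{diag}(\lambda,-\lambda)$ with $\lambda\neq0$, hence has two distinct eigenvalues; a short computation shows the centralizer of $\mathrm{diag}(\lambda,-\lambda)$ in $\mathfrak{sl}(2,\R)$ is exactly the line of diagonal trace--zero matrices, and conjugation preserves dimension, so $\dim\ker\ad(H)=1$. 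By rank--nullity, $\dim\mathrm{Im}(\ad(H))=3-1=2$. A $2$--dimensional subspace contained in a $2$--dimensional subspace of $\mathfrak{sl}(2,\R)\cong\R^3$ must equal it, which finishes the proof.

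The computations are routine; the only points needing care are the sign in the identification $d\phi_e=-\ad(H)$ and the verification that the centralizer of a diagonal matrix with distinct eigenvalues is one--dimensional — equivalently, that $H$ is regular semisimple. Alternatively one can bypass the dimension count by computing $[H,A]$, $[H,B]$, $[H,C]$ in the coordinates $H=xA+yB+zC$ and checking directly that their span is the plane $\{(a,b,c):xa+yb-zc=0\}$, which is precisely the tangent plane to the hyperboloid at $(x,y,z)$; I expect the bookkeeping in this explicit route to be the main (though still elementary) obstacle.
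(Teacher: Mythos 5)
Your proof is correct, and it handles the substantive inclusion differently from the paper. The paper argues both directions with curves: it asserts that every curve in $\mathcal{O}(\lambda A)$ through $H$ is of the form $g_tHg_t^{-1}$ with $g_t$ a smooth curve in $\SL(2,\R)$, differentiates to get $T_H\mathcal{O}(\lambda A)\subset\mathrm{Im}(\ad(H))$, and then produces curves realizing each $[X,H]$ as a velocity for the reverse inclusion. Your Step 1 coincides with that second half (using one-parameter subgroups $\exp(tX)$, with the sign $d\phi_e=-\ad(H)$ correctly tracked), but instead of the curve-lifting claim you obtain equality by a dimension count: the orbit is the regular level set $x^2+y^2-z^2=\lambda^2$, hence a surface with $\dim T_H\mathcal{O}(\lambda A)=2$, while $H$ is regular semisimple (conjugate to $\mathrm{diag}(\lambda,-\lambda)$, $\lambda\neq0$), so its centralizer is one--dimensional and rank--nullity gives $\dim\mathrm{Im}(\ad(H))=2$. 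What this buys you is that you avoid the paper's unproved assertion that every curve in the orbit lifts to a curve in the group (a standard fact, but one that really rests on the orbit map being a submersion onto the orbit); the cost is that your argument leans on the earlier explicit identification of $\mathcal{O}(\lambda A)$ with the hyperboloid and on the centralizer computation, so it is less intrinsic and would not transfer verbatim to orbits where the stabilizer dimension is not known in advance. Note also that your fact $\dim\ker\ad(H)=1$ is exactly what the paper invokes immediately after this proposition, so the two treatments are fully consistent; your sketched alternative (checking that the columns of $\ad(H)$ span the plane $xa+yb-zc=0$) would likewise go through.
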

\begin{proof}
Notice that every curve passing through $H$ in $\mathcal{O}(\lambda A)$ has the form $g_tHg^{-1}_t$, 
where $g\colon[-\epsilon,\epsilon]\rightarrow \mathrm{SL}(2, \R)$ 
smoothly satisfies $g_0=\mathrm{Id}$. 
Thus, every tangent vector $v\in T_H\mathcal{O}(\lambda A)$ can be written as
$$
v=\frac{d}{dt}g_tHg^{-1}_t\bigg|_{t=0},
$$
for some smooth curve $g_t$. However, if $\frac{d}{dt}g_t\bigg|_{t=0}=X\in \mathfrak{sl}(2, \R)$, then
\begin{equation}
\frac{d}{dt}g_tHg^{-1}_t\bigg|_{t=0}=\frac{d}{dt}\mathrm{Ad}_{g_t}H\bigg|_{t=0}=d(\mathrm{Ad}_{\mathrm{Id}})(X)H=\mathrm{ad}(X)H=[X,H].
\label{eq:17}
\end{equation}
Hence we get $v\in \mathrm{Im}(\ad(H))$, which forces the inclusion $T_H\mathcal{O}(\lambda A)\subset\mathrm{Im}(\ad(H))$.
 
Conversely, given $X_0=[X, H]$, just take some smooth curve $g_t$ subject to 
\begin{eqnarray*}
\frac{d}{dt}g_t\bigg|_{t=0}=X_0 & \textup{ with, } & g_0=\mathrm{Id}\in \mathrm{SL}(2, \R),
\end{eqnarray*}
and plug it into Equation (\ref{eq:17}): we obtain the desired result.
\end{proof}
Let $A, B, C$ be the basis of $\mathfrak{sl}(2,\mathbb{R})$ given in (\ref{baseSL2}). 
Since they satisfy the relations $
[B, A]=C, [C, A]=B$, and $[B, C]=A$, for any $H\in\mathfrak{g}$ written as $H=xA+yB+zC$  we get
\[\ad(H)=\begin{bmatrix}
0&-z&-y\\
z&0&x\\
y&-x&0
\end{bmatrix}.\]
Since  $\dim\ker\ad(H)=1$, by taking any two column vectors in $\ad(H)$ we have
\[T_H\mathcal{O}(\lambda A)= \spn \{zB+yC,xB-yA\},\]
whenever $H\in\mathcal{O}(\lambda A)$. As $H=xA+yB+zC$ implies that 
$x, y, z$ satisfy $x^2+y^2-z^2=\lambda^2$, we get
\[T_H\mathcal{O} (\lambda A)=\spn\{zB+yC,xB-yA\colon x^2+y^2-z^2=\lambda^2\}.\]

\section{Morse theory on the adjoint orbit $\mathcal{O}(A)$}
We use the adjoint orbit $\mathcal{O}(\lambda A)$ 
to construct an example which shows how the compactness hypothesis is essential to the  Morse-Smale theorem. 
We take $\lambda=1$ to ease computations. 

Let $M$ be a manifold and $f\colon M\rightarrow\R$ a smooth function. 
A critical point $p$ of $f$ is \textbf{non-degenerate} if the Hessian matrix of $f$ in $p$ is non-degenerate. 
If all critical points of $f$ are non-degenerate, we say $f$ is a \textbf{Morse function}.

\begin{theorem}[Morse--Smale]\cite[Lem.\thinspace2.23]{LN}.
	Let $M$ be a compact manifold without boundary and $f\colon M\rightarrow\R$ a  Morse function. 
If $\phi_p(t)$ is the trajectory of the gradient vector field  $\nabla f$ at $p$, then both limits 
	\[\lim_{t\rightarrow \infty}\phi_p(t)\qquad\textup{ and }\qquad\lim_{t\rightarrow -\infty}\phi_p(t)\]
	exist, in fact, they are critical points of $f$.
	\label{thm:4}$\hspace{4cm}\square$
\end{theorem}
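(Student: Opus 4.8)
The plan is to run the standard limit-set argument for gradient flows, using compactness of $M$ at every step; this is precisely the structure that will break down on the non-compact orbit $\mathcal{O}(A)$ treated in the next section. First I would note that on the compact boundaryless manifold $M$ the vector field $\nabla f$ is complete, so the trajectory $\phi_p(t)$ (with $\phi_p(0)=p$) is defined for all $t\in\R$. Along it one computes $\frac{d}{dt}f(\phi_p(t))=\inner{\nabla f(\phi_p(t)),\dot\phi_p(t)}=\norm{\nabla f(\phi_p(t))}^{2}\ge 0$, so $t\mapsto f(\phi_p(t))$ is non-decreasing (the descending-gradient convention only flips a sign and changes nothing below). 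Since $f$ is continuous on the compact set $M$ it is bounded, hence $f(\phi_p(t))$ has finite limits $c_{+}$ and $c_{-}$ as $t\to+\infty$ and $t\to-\infty$.

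Next I would invoke the standard facts of topological dynamics for a complete flow on a compact space: the $\omega$-limit set $\omega(p)=\bigcap_{T\ge 0}\overline{\{\phi_p(t):t\ge T\}}$ is non-empty, compact, connected and invariant under the flow, and dually for the $\alpha$-limit set $\alpha(p)$. The crucial observation is that $f\equiv c_{+}$ on $\omega(p)$: if $q=\lim_n\phi_p(t_n)$ with $t_n\to\infty$, then $f(q)=\lim_n f(\phi_p(t_n))=c_{+}$ by continuity. Because $\omega(p)$ is flow-invariant, the entire trajectory through any such $q$ lies inside $\omega(p)$, so $f$ is constant along it; differentiating gives $\norm{\nabla f(q)}^{2}=0$, so every point of $\omega(p)$ is a critical point of $f$.

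Finally I would bring in the Morse hypothesis. Non-degeneracy of the Hessian at a critical point makes that critical point isolated, so the critical set of $f$ is discrete; being closed in the compact manifold $M$, it is therefore finite. Thus $\omega(p)$ is a non-empty connected subset of a finite set, hence a single point, and that point is a critical point of $f$; this forces $\lim_{t\to\infty}\phi_p(t)$ to exist and to equal that critical point. The same argument applied to $\alpha(p)$ handles $t\to-\infty$.

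The main obstacle is not any one computation but the dynamical-systems packaging: one must justify that the $\omega$-limit set of a bounded trajectory is non-empty, compact, connected and invariant, and it is connectedness that does the real work — without it $\omega(p)$ could be a union of several critical points and the trajectory need not converge at all. An alternative route that avoids connectedness is a purely local argument near a non-degenerate critical point (Morse lemma together with a Lyapunov-type estimate showing that a trajectory which enters a small neighbourhood of such a point cannot leave it), but the limit-set argument is the most economical; in any case the statement is quoted from \cite{LN}, so here it suffices to record the idea.
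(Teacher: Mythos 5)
Your argument is correct, but there is nothing in the paper to compare it with: the paper does not prove this theorem at all, it simply quotes it from the reference \cite{LN} (hence the $\square$ at the end of the statement). Your limit-set proof is the standard one and its outline is complete: completeness of the flow on a compact boundaryless $M$, monotonicity and boundedness of $f\circ\phi_p$, the $\omega$- and $\alpha$-limit sets being non-empty, compact, connected and invariant, constancy of $f$ on them forcing them to consist of critical points, and the Morse hypothesis making the critical set finite so that a connected limit set is a single point. The only step worth spelling out if you were to write this in full is the last inference, that $\omega(p)=\{q\}$ implies $\lim_{t\to\infty}\phi_p(t)=q$; this again uses compactness (otherwise a subsequence could escape every neighbourhood of $q$), and it is precisely the ingredient that fails on the non-compact orbit $\mathcal{O}(A)$ studied in the paper, where the trajectories $\gamma_1,\gamma_2$ leave every compact set.
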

Dynamics of the gradient vector field
We study the behaviour of a gradient vector field restricted to  $\mathcal{O}(A)$.
For that, we consider the function $f(x, y, z)=yz$. 
The gradient of $f$ (with respect to the canonical inner product) is
\[\nabla f(x, y, z)=\frac{\partial f}{\partial x}(x, y, z)e_1+\frac{\partial f}{\partial y}(x, y, z)e_2+\frac{\partial f}{\partial z}(x, y, z)e_3=ze_2+ye_3.\]
Therefore, the gradient matrix of $f$ in the canonical basis is
\[\nabla f=\left(\begin{array}{c}
0\\z\\y
\end{array}\right).\]
\begin{proposition}
	The gradient vector field  $\nabla f$ is tangent to $\mathcal{O}(A)$.
\end{proposition}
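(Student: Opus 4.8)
The plan is to exploit the fact that $\mathcal{O}(A)$ is a regular level set of a polynomial and to use the standard criterion that a vector field is tangent to such a set precisely when it annihilates the differential of the defining function. Set $G(x,y,z)=x^2+y^2-z^2$. By the proposition identifying $\mathcal{O}(\lambda A)$ with the one--sheeted hyperboloid (with $\lambda=1$), we have $\mathcal{O}(A)=G^{-1}(1)$. Since $\nabla G(x,y,z)=(2x,2y,-2z)$ vanishes only at the origin, and the origin does not lie on $\mathcal{O}(A)$ because here $\lambda\neq 0$, the value $1$ is a regular value of $G$; hence for each $H=xA+yB+zC\in\mathcal{O}(A)$ the tangent space $T_H\mathcal{O}(A)$ is exactly the kernel of $dG_H$, that is, the orthogonal complement of $\nabla G(x,y,z)$ with respect to the canonical inner product of $\R^3$.

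It then remains to check that $\nabla f$ lies in that kernel along the orbit, i.e. $\langle \nabla G,\nabla f\rangle=0$ on $\mathcal{O}(A)$. Using the gradient $\nabla f(x,y,z)=(0,z,y)$ computed above, this reduces to the single identity
\[
\langle \nabla G,\nabla f\rangle=(2x)(0)+(2y)(z)+(-2z)(y)=0,
\]
which in fact holds at every point of $\R^3$, hence in particular on $\mathcal{O}(A)$. Therefore $\nabla f(H)\in T_H\mathcal{O}(A)$ for all $H\in\mathcal{O}(A)$, which is the claim.

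As an alternative (and a consistency check) one may read the conclusion directly off the description of the tangent space obtained in Section \ref{section5}: under the identification $e_1\leftrightarrow A$, $e_2\leftrightarrow B$, $e_3\leftrightarrow C$, the gradient $ze_2+ye_3$ corresponds to $zB+yC$, which is literally one of the two spanning vectors in $T_H\mathcal{O}(\lambda A)=\spn\{zB+yC,\ xB-yA\}$; so tangency is immediate. The computation itself is trivial, so there is no real obstacle; the only point requiring a word of care is the regular-value argument that $T_H\mathcal{O}(A)=\ker dG_H$, and this is precisely where the standing hypothesis $\lambda\neq 0$ (excluding the singular point of the quadric) is used.
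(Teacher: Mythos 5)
Your proof is correct, and it takes a tidier route than the paper's, although both rest on the same underlying idea of checking orthogonality to a normal direction. The paper describes $\mathcal{O}(A)$ as a graph $z=\pm\sqrt{x^2+y^2-1}$ and splits into the cases $z_0\geq 0$ and $z_0\leq 0$, computing in each case a normal vector involving $\sqrt{x_0^2+y_0^2-1}$ and verifying that its inner product with $(0,z_0,y_0)$ vanishes; those graph charts and normal formulas degenerate on the equator $z=0$, a point the paper passes over. You instead work with the global defining polynomial $G(x,y,z)=x^2+y^2-z^2$ and the regular-value criterion: since $\nabla G$ vanishes only at the origin, which does not lie on the orbit, $T_H\mathcal{O}(A)=\ker dG_H$, and the single identity $\langle\nabla G,\nabla f\rangle=2yz-2zy=0$, valid on all of $\R^3$, gives tangency with no case split, no square roots, and no trouble at $z=0$; it also makes transparent that $\lambda\neq 0$ is exactly the regular-value hypothesis. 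Your alternative check is likewise valid and is arguably the most direct argument available inside the paper's own framework: under the identification $e_1\leftrightarrow A$, $e_2\leftrightarrow B$, $e_3\leftrightarrow C$, the gradient $\nabla f=ze_2+ye_3$ is literally the spanning vector $zB+yC$ of $T_H\mathcal{O}(\lambda A)$ computed in Section~\ref{section5}. What the paper's computation buys is only the explicit normal in graph coordinates; your version buys uniformity and a cleaner logical structure.
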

\begin{proof}
	Let us consider the relation $g(x,y)=\pm\sqrt{x^2+y^2-1}$  which describes the one--sheeted hyperboloid. 
We look first at the case $z_0\geq0$ (hence $g(x,y)=\sqrt{x^2+y^2-1}$). 
The normal vector to the surface at a given point $(x_0, y_0, z_0)$ is
	\[\vec{n}=\left(\frac{x_0}{\sqrt{x_0^2+y_0^2-1}}, \frac{y_0}{\sqrt{x_0^2+y_0^2-1}}, -1\right).\]
	Thus, taking the inner product between $\vec{n}$ and $\nabla f(x_0, y_0, z_0)=(0, z_0, y_0)^T$, yields
	\[\langle \vec{n}, \nabla f(x_0, y_0, z_0) \rangle=\frac{y_0z_0}{\sqrt{x_0^2+y_0^2-1}}-y_0;\]
	and after using $z_0=\sqrt{x_0^2+y_0^2-1}$ we reach
	\[\langle \vec{n}, \nabla f(x_0, y_0, z_0) \rangle=0.\]
	Hence, $\nabla f$ takes the point $(x_0, y_0, z_0)$ to a vector tangent to the surface, 
and so, $\nabla f$ is tangent to $\mathcal{O}(A)$ for $z_0\geq0$.\\
	
	Similarly for $z_0\leq0$, we use $g(x,y)=-\sqrt{x^2+y^2-1}$ instead and get
	\[\vec{n}=\left(-\frac{x_0}{\sqrt{x_0^2+y_0^2-1}},- \frac{y_0}{\sqrt{x_0^2+y_0^2-1}}, -1\right).\]
	With $z_0=-\sqrt{x_0^2+y_0^2-1}$, we obtain
	\[\langle \vec{n}, \nabla f(x_0, y_0, z_0) \rangle=-\frac{y_0z_0}{\sqrt{x_0^2+y_0^2-1}}-y_0=0.\]

	In either case, we conclude that the gradient vector field $\nabla f$ is tangent to $\mathcal{O}(A)$. 
	\end{proof}
\begin{figure}[h]
	\centering      
	\includegraphics[width=0.8\textwidth]{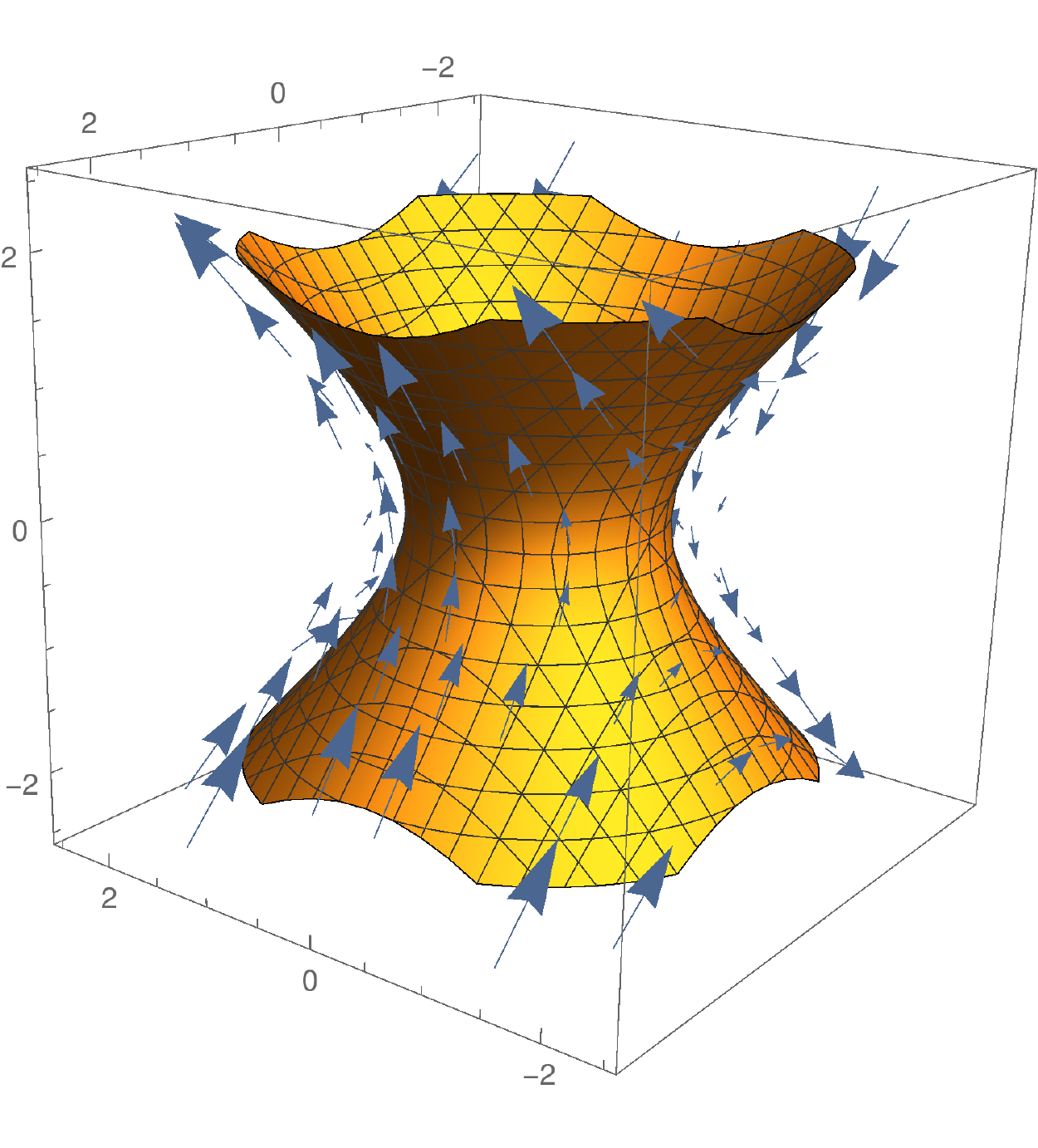}      
	\caption{The vector field restricted to $\mathcal{O}(A)$.}
	\label{fig:vectorfieldrestricted}
\end{figure}
\begin{proposition}
	The function $f(x, y, z)=yz$ restricted to $\mathcal{O}(A)$ is a Morse function.
\end{proposition}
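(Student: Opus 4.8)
The plan is to find all critical points of $f|_{\mathcal{O}(A)}$ and check non-degeneracy of the Hessian at each of them, working separately on the two coordinate charts $z=\pm\sqrt{x^2+y^2-1}$ that were already used in the previous proposition. First I would parametrize the upper sheet-region $z>0$ of $\mathcal{O}(A)$ by $(x,y)\mapsto(x,y,\sqrt{x^2+y^2-1})$ on the open set $x^2+y^2>1$, so that in this chart
\[
\tilde f(x,y)=y\sqrt{x^2+y^2-1}.
\]
Then I would compute $\nabla\tilde f$, set it to zero, and solve; the $\partial_x$ equation gives $\dfrac{xy}{\sqrt{x^2+y^2-1}}=0$ and the $\partial_y$ equation gives $\sqrt{x^2+y^2-1}+\dfrac{y^2}{\sqrt{x^2+y^2-1}}=0$. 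The second equation has no solution with $x^2+y^2>1$ (both terms are positive), so there are \emph{no} critical points on the region $z>0$; by the symmetric computation with $z=-\sqrt{x^2+y^2-1}$ there are none on $z<0$ either.

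Next I would handle the remaining part of the surface, the "waist" circle $z=0$, $x^2+y^2=1$, which is not covered by either chart above; here I would instead use the chart $(y,z)\mapsto(\pm\sqrt{1+z^2-y^2},y,z)$ near a point with $x\neq 0$ (every point of the waist has $x^2=1$, so $x\neq0$), giving $\hat f(y,z)=yz$ directly as a function of the free coordinates $y,z$. Then $\nabla\hat f=(z,y)$, which vanishes only at $y=z=0$, i.e. at the two points $(\pm1,0,0)$, and these do lie on $\mathcal{O}(A)$. So the critical set of $f|_{\mathcal{O}(A)}$ is exactly $\{(1,0,0),(-1,0,0)\}$. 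At each such point the Hessian of $\hat f(y,z)=yz$ in the $(y,z)$-coordinates is the constant matrix $\left[\begin{smallmatrix}0&1\\1&0\end{smallmatrix}\right]$, which has determinant $-1\neq 0$, hence is non-degenerate. Since non-degeneracy of the Hessian at a critical point is chart-independent, this shows every critical point of $f|_{\mathcal{O}(A)}$ is non-degenerate, so $f|_{\mathcal{O}(A)}$ is a Morse function.

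The only real subtlety — the main obstacle to a clean writeup — is bookkeeping about charts: one must make sure the union of the charts used actually covers all of $\mathcal{O}(A)$ (the graphs over the $xy$-plane miss the waist circle, which is exactly where the critical points sit), and one must invoke the standard fact that the non-degeneracy of the Hessian at a critical point does not depend on the choice of local coordinates. With those two points addressed, the computation itself is the short routine calculation sketched above. I would also remark, as a sanity check against Theorem~\ref{thm:4}, that $\mathcal{O}(A)$ is non-compact and that — as will be shown in the sequel — not all gradient trajectories converge, which is consistent with having found only two critical points.
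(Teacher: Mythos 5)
There is a genuine (though easily repaired) gap, and it sits exactly at the chart-bookkeeping step you yourself flag as the main subtlety. The waist of $\mathcal{O}(A)$ is the circle $x^2+y^2=1$, $z=0$; its points satisfy $x^2=1-y^2$, not $x^2=1$, so the parenthetical claim ``every point of the waist has $x^2=1$, so $x\neq0$'' is false: the points $(0,\pm1,0)$ lie on the waist and have $x=0$. These two points are covered neither by your graphs $z=\pm\sqrt{x^2+y^2-1}$ (which only see $z\neq0$) nor by your graphs $x=\pm\sqrt{1+z^2-y^2}$ (which require $x\neq0$), so as written your charts do not cover $\mathcal{O}(A)$, and the assertion that the critical set is exactly $\{(1,0,0),(-1,0,0)\}$ is not fully established. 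The fix is immediate: near $(0,\pm1,0)$ use the graph $y=\pm\sqrt{1+z^2-x^2}$, in which $f=\pm z\sqrt{1+z^2-x^2}$ satisfies $\partial f/\partial z=\pm1\neq0$ at those points, so they are not critical. Everything else in your argument is correct: in the $x\neq0$ charts $f$ is literally $yz$ in the free coordinates $(y,z)$, its only critical points are $(\pm1,0,0)$, the Hessian there is $\left[\begin{smallmatrix}0&1\\1&0\end{smallmatrix}\right]$, which is non-degenerate, and non-degeneracy at a critical point is indeed independent of the chart.

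Your route also differs from the paper's, which avoids charts altogether: since the previous proposition shows the ambient gradient $\nabla f=(0,z,y)$ is everywhere tangent to $\mathcal{O}(A)$, the critical points of the restriction are precisely the zeros of this field on the orbit, i.e. $y=z=0$, $x=\pm1$; this gives completeness of the critical set in one line and would also close your gap. The paper then checks non-degeneracy by restricting the ambient Hessian of $f$ to the tangent planes $T_{\pm A}\mathcal{O}(A)$ computed in Section \ref{section5} (which is legitimate at these points because the ambient gradient vanishes there, so no correction term from the embedding appears); your chart computation produces the same $2\times2$ form and is, on the non-degeneracy side, somewhat more self-contained.
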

\begin{proof}
Notice that $(1, 0, 0)$ and $(-1, 0, 0)$ 
are the singularities of the restriction gradient vector field to the orbit $\mathcal{O}(A)$. 
Let $\mathrm{Hess}(f)$ be the Hessian matrix of $f$, namely,
\[\mathrm{Hess}(f)=\begin{bmatrix}
0&0&0\\
0&0&1\\
0&1&0
\end{bmatrix}.\]
Note that the restriction of $\mathrm{Hess}(f)$ to each of the tangent spaces at 
$(1, 0, 0)$ and $(-1, 0, 0)$ is non-degenerate. 
In fact, using the results of Section \ref{section5} and  identifying $(1, 0, 0)$ and 
$(-1, 0, 0)$ with the matrices $A$ and $-A$ in $\mathcal{O}(A)$, respectively,  
we get $T_A\mathcal{O}(A)=\spn\{(0, 0, 1),(0, 1, 0)\}$ and $T_{-A}\mathcal{O}(A)=\spn\{(0, 0, -1),(0, -1, 0)\}$. 
It is not hard to conclude the equalities
\begin{align*}
T_A\mathcal{O}(A)\cap \ker\mathrm{Hess}(f)&=0,\\
T_{-A}\mathcal{O}(A)\cap \ker\mathrm{Hess}(f)&=0,
\end{align*}
which implies that $f|_{\mathcal{O}(A)}$ is a Morse function.
\end{proof}
Using the dynamics above, we  obtain trajectories of $\nabla f$ which 
are not complete, thus  showing that the hypothesis of compactness in  Theorem \ref{thm:4}
is fundamental.\\

 The trajectories of the gradient $\nabla f$ restricted to $T_A\mathcal{O}(A)$
 are solutions of
 the following linear system of differential equations
\[\begin{bmatrix}
y'(t)\\z'(t)
\end{bmatrix}=\begin{bmatrix}
0&1\\1&0
\end{bmatrix}\begin{bmatrix}
y(t)\\z(t)
\end{bmatrix}.\]
Since $1$ and $1$ are eigenvalues of the linear part, 
with eigenvectors $v_1=(-1, 1)$ and $v_2=(1, 1)$, respectively, it follows that the general solution has the form
\[\begin{bmatrix}
y(t)\\z(t)
\end{bmatrix}=c_1e^{-t}v_1+c_2e^tv_2.\]

Setting $c_1=0$ and $c_2=1$, we obtain $\gamma_1(t)=e^t(1, 1)$. 
Note that $\lim_{t\rightarrow -\infty}\gamma_1(t)=(0, 0)$ but the limit 
when $t\rightarrow\infty$ does not make sense in $\mathcal{O}(A)$.\\

On the other hand, taking $c_2=0$ and $c_1=1$, we get $\gamma_2(t)=e^{-t}(-1, 1)$. 
Where $\lim_{t\rightarrow \infty}\gamma_2(t)=(0, 0)$ but the limit when $t\rightarrow-\infty$ does not exist.\\

Considering $\gamma_1$ and $\gamma_2$ in the tangent space 
$T_A\mathcal{O}(A)$, we have that these are the lines $(1, t, t)$ and $(1, -t, t)$. 
Moreover, they correspond to the lines $l_1$ and $l_2$ as described in 
Equation (\ref{eq:10}) in Lemma \ref{lemma2} of Section \ref{section3.1}. 
So they are contained in $\mathcal{O}(A)$.
Summarizing, we obtain two trajectories 
$\gamma_1(t)$ and $\gamma_2(t)$ of the gradient $\nabla f$ 
whose limit points do not belong to the orbit, namely, do not satisfy 
the conclusions of 
Theorem \ref{thm:4}. 
This  happens because the one--sheeted hyperboloid $\mathcal{O}(A)$ is a non-compact submanifold of $\R^3$.
\begin{figure}[h]
	\centering      
	\includegraphics[width=0.45\textwidth]{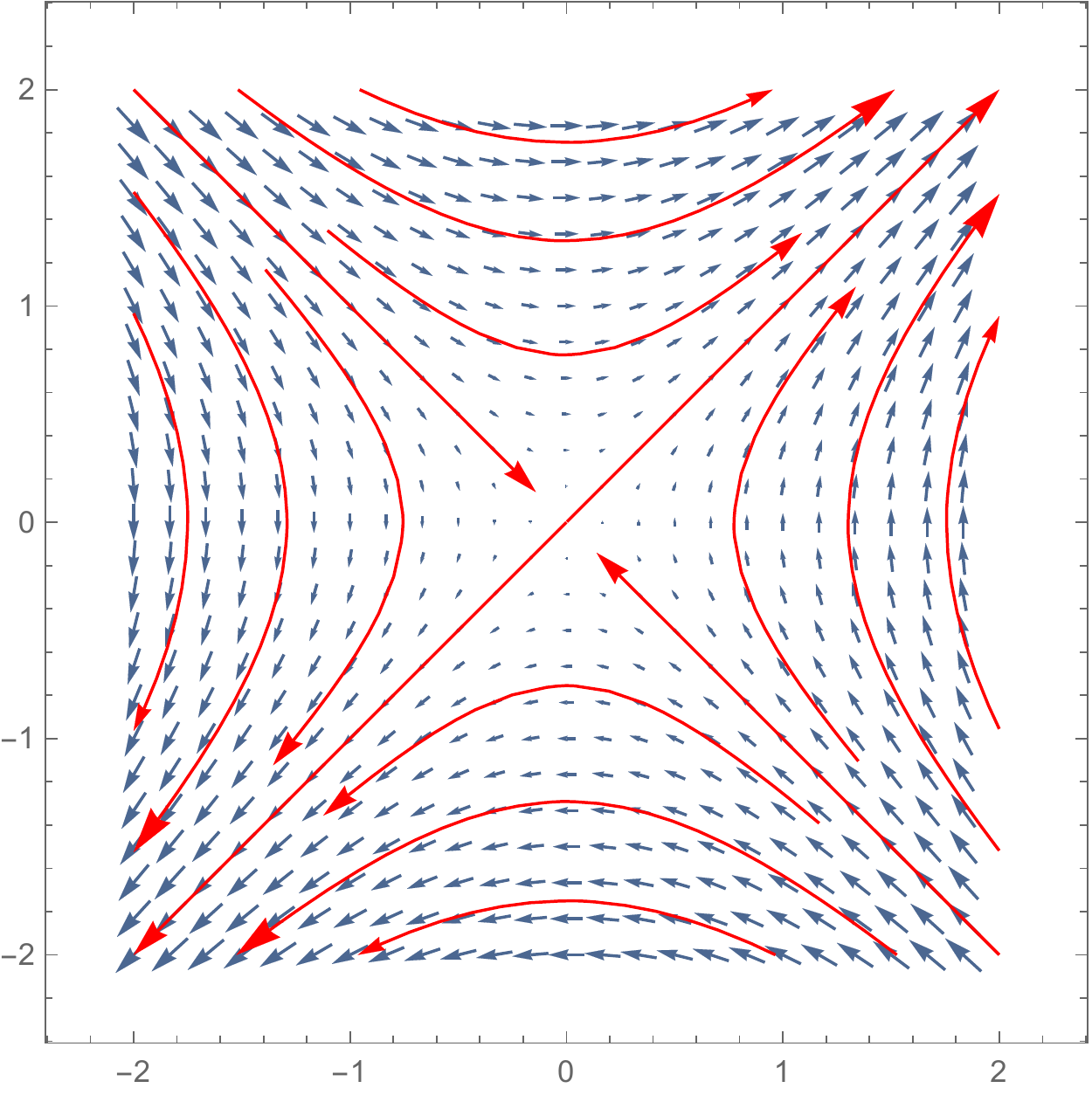}      
	\caption{The gradient $\nabla f$ restricted on $T_A\mathcal{O}(A)$ at $A=(1, 0, 0)$.}
	\label{fig:vectorfieldrestrictedonA}
\end{figure}

\section{A symplectic structure in $\mathcal{O}(A)$}
Here we realize the adjoint orbit $\mathcal{O}(A)$ as a symplectic manifold. 
We follow the construction by Kirillov--Kostant--Souriau \cite{Kir1,Kir2}. 
First we construct the symplectic form on the coadjoint orbit and then, 
using the Killing form, we induce the symplectic structure on the adjoint orbit  $\mathcal{O}(A)$. 
For a more general study of symplectic geometry on adjoint orbits we refer the reader to \cite{ABB} and \cite{GGSM1}.

In order to perform the construction, we start by recalling some basic definitions of symplectic geometry.

	Let $V$ be a real vector space and $\omega\colon V\times V\rightarrow \R$ a 
skew--symmetric bilinear form. We say that $\omega$ is a \textbf{symplectic form} if it is non-degenerate, that is, 
	$\omega(u,v)=0$ for all $v$ implies $u=0.$
	In this case, we say that $(V, \omega)$ is a \textbf{symplectic vector space}.

	Let $M$ be a manifold. We say that a $2$--form $\omega\in\Omega^2(M)$ is \textbf{non-degenerate} 
if the $1$--form $\omega_x=\omega(x, \cdot)$ is non-degenerate for each $x\in M$. 
Thus, for every $x\in M$, the tangent space $T_xM$ is a symplectic vector space.

	A \textbf{symplectic structure} on $M$ is a $2$--form $\omega\in\Omega^2(M)$ 
which is non-degenerate and closed. In this case, we say that $(M, \omega)$ is a \textbf{symplectic manifold}.

Now we define the coadjoint representation, 
which is the dual of the adjoint representation and will allow us to define coadjoint orbits.
First, let us consider the natural pairing between $\mathfrak{g}^*$ and $\mathfrak{g}$ given by
 
 \[	\begin{aligned}
\langle\,\, , \,\rangle\colon \mathfrak{g}^{\ast}\times\mathfrak{g}&\rightarrow \R\\
(\xi, X)&\mapsto\langle\xi,X\rangle=\xi(X).
 \end{aligned}\]
 For $\xi\in\mathfrak{g}^*$, we define $\Ad_g^{*}\xi$ by the rule
 $$\langle\Ad_g^*\xi,X\rangle=\langle\xi,\Ad_{g^{-1}}X\rangle,\qquad X\in\mathfrak{g}.$$

The \textbf{coadjoint representation of $G$ on} $\mathfrak{g}^*$ is by definition
		\[\begin{aligned}
		\Ad^*\colon G&\rightarrow \Aut(\mathfrak{g}^*)\\
		g&\mapsto\Ad^*_g.
		\end{aligned}\]

Similarly, we have a coadjoint representation of $\mathfrak{g}$ on $\mathfrak{g}^*$ given by
\[\begin{aligned}
\ad^*\colon \mathfrak{g}&\rightarrow \mathfrak{gl}(\mathfrak{g}^*)\\
u&\mapsto\ad^*_u.
\end{aligned}\]
To be more explicit, given $u\in\mathfrak{g}$ and $\xi\in\mathfrak{g}^*$, 
we have $\langle \ad^*_u(\xi),v\rangle =-\xi([u,v])$. Here $[\,\, , \,]$ is the Lie bracket on $\mathfrak{g}$.

Let us consider $\xi\in\mathfrak{g}^*$ and denote by 
\[\mathcal{O}^*=\{\varphi\in\mathfrak{g}^*\colon\textup{  there is } u\in G, \Ad_u^*(\xi)=\varphi\}\] 
the \textbf{coadjoint orbit} of $\xi$. Since the vectors $\Ad_u^*(\xi)$ span the tangent space $T_\xi\mathcal{O}^*$
we have
\[T_\xi\mathcal{O}^*=\{\ad_{u}^*(\xi)\colon u\in\mathfrak{g}\}.\]
Note that for a fixed $\xi\in\mathfrak{g}^*$, the value of $\xi[u, v]$ 
depends just on $\ad_{u}^*$ and $\ad_{v}^*$ at the point $\xi$. 
In fact, if $\ad_{u}^*(\xi)=\ad_{u'}^*(\xi)$, then 
\[\xi(u-u', v)=(\ad_u^*-\ad_{u'}^*)(\xi)(v)=0,\]
for each $v\in\mathfrak{g}$. 
Thus, the following definition of a skew-symmetric bilinear form on $T_\xi\mathcal{O}^*$ makes sense.

For $\xi\in\mathfrak{g}^*$ fixed, we define a skew-symmetric bilinear form on $T_\xi\mathcal{O}^*$ by $$\omega_{\xi}(\ad_{u}^*(\xi), \ad_{v}^*(\xi))=\xi([u, v]).$$

\begin{lemma}\label{lemma3}
	For each $\xi\in\mathfrak{g}^*$ the form $\omega_{\xi}$ is non-degenerate.
\end{lemma}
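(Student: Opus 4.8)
The plan is to show that the kernel of $\omega_\xi$ on $T_\xi\mathcal{O}^*$ is trivial by a direct identification of the radical of this form with the zero subspace. Recall that $T_\xi\mathcal{O}^* = \{\ad_u^*(\xi) : u \in \mathfrak g\}$, so a general tangent vector has the form $\ad_u^*(\xi)$, and the form is given by $\omega_\xi(\ad_u^*(\xi), \ad_v^*(\xi)) = \xi([u,v])$. Suppose $\ad_u^*(\xi)$ lies in the radical, i.e. $\omega_\xi(\ad_u^*(\xi), \ad_v^*(\xi)) = 0$ for every $v \in \mathfrak g$. By definition this says $\xi([u,v]) = 0$ for all $v$. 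But $\langle \ad_u^*(\xi), v\rangle = -\xi([u,v])$, so the condition $\xi([u,v]) = 0$ for all $v$ is precisely the statement that $\ad_u^*(\xi) = 0$ as an element of $\mathfrak g^*$. Hence the radical is $\{0\}$, which is exactly non-degeneracy.

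The key point I would emphasize is that this is essentially a tautology once one unwinds the definitions: the formula $\langle \ad_u^*(\xi), v\rangle = -\xi([u,v])$ relating the coadjoint action to the bracket is what makes the Kirillov--Kostant--Souriau form non-degenerate automatically, with no further hypotheses on $\mathfrak g$ or $\xi$ needed. So the structure of the proof is: (1) take an arbitrary element of the radical, written as $\ad_u^*(\xi)$ for some $u$; (2) translate the vanishing condition $\omega_\xi(\ad_u^*(\xi), \cdot) \equiv 0$ into $\xi([u,v]) = 0$ for all $v$; (3) recognize this via the explicit formula for $\ad^*$ as saying the tangent vector $\ad_u^*(\xi)$ is itself zero. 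There is one subtlety worth a sentence of care: the form is a priori only well-defined on $T_\xi\mathcal{O}^*$ because its value depends only on $\ad_u^*(\xi)$ and not on the choice of $u$ — but this well-definedness was already established in the paragraph preceding the lemma, so I may cite it.

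I do not expect a genuine obstacle here; the main thing to be careful about is bookkeeping of signs and making sure the quantifier ``for all $v \in \mathfrak g$'' is used in the right place — namely that ``$\ad_u^*(\xi)$ pairs to zero with everything in $T_\xi\mathcal{O}^*$'' is equivalent to ``$\ad_u^*(\xi)$ pairs to zero with every $\ad_v^*(\xi)$'' which by the displayed formula is ``$\xi([u,v]) = 0$ for all $v$'', and this last statement is about pairing $\ad_u^*(\xi)$ against all of $\mathfrak g$, not just against the subspace spanned by coadjoint vectors. Since $\ad_u^*(\xi) \in \mathfrak g^*$ is a linear functional on all of $\mathfrak g$, vanishing on all $v$ forces it to be the zero functional. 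This closes the argument.
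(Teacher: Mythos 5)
Your proof is correct and follows essentially the same route as the paper: assume $\omega_\xi(\ad_u^*(\xi),\ad_v^*(\xi))=0$ for all $v$, translate this into $\xi([u,v])=0$ for all $v$, and use the defining formula $\langle\ad_u^*(\xi),v\rangle=-\xi([u,v])$ to conclude $\ad_u^*(\xi)=0$. Your extra remarks on well-definedness and on the quantifier over all of $\mathfrak{g}$ are accurate and, if anything, make the argument more careful than the paper's one-line version.
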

\begin{proof}
	Note that if \[\omega_{\xi}(\ad_{u}^*(\xi), \ad_{v}^*(\xi))=0,\] 
	for all $v\in\mathfrak{g}$, then $\xi([u, v])=0=-\xi([u, v])$ and therefore $\ad_{u}^*(\xi)=0$.
\end{proof}
Since we have $[\Ad_g(u),\Ad_g(v)]=\Ad_g([u, v])$, we get the equality
$$
\Ad_g^*\xi([\Ad_g(u),\Ad_g(v)])=\Ad_g^*\xi(\Ad_g([u, v]))=\xi([u, v]).
$$
Thus, $\omega_{\xi}$ defines a point-wise form $\omega$ on $\mathcal{O}$.
\begin{lemma}\label{lemma4}
The $2$--form $\omega$ is closed, that is, satisfies $\mathrm{d}\omega=0$.
\end{lemma}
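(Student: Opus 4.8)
The plan is to prove that the Kirillov--Kostant--Souriau $2$--form $\omega$ on the coadjoint orbit $\mathcal{O}^*$ is closed by a direct computation with the exterior derivative formula for $2$--forms, using that the fundamental vector fields of the coadjoint action span every tangent space. First I would fix $\xi \in \mathfrak{g}^*$ and, for each $u \in \mathfrak{g}$, introduce the fundamental vector field $X_u$ on $\mathcal{O}^*$ whose value at a point $\eta$ is $\ad^*_u(\eta)$; by the description $T_\xi\mathcal{O}^* = \{\ad^*_u(\xi)\colon u\in\mathfrak{g}\}$ these vector fields span the tangent space at every point, so it suffices to verify $\mathrm{d}\omega(X_u, X_v, X_w) = 0$ for all $u,v,w\in\mathfrak{g}$. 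The key structural fact is that $u\mapsto X_u$ is a Lie algebra anti-homomorphism (or homomorphism, depending on sign conventions), i.e. $[X_u, X_v] = -X_{[u,v]}$, which follows from the fact that $\ad^*$ is the Lie algebra representation dual to $\ad$; and that, by the very definition of $\omega$, one has $\omega(X_u, X_v)(\eta) = \langle \eta, [u,v]\rangle$ as a function on the orbit.

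The main step is then to expand
\[
\mathrm{d}\omega(X_u,X_v,X_w) = X_u\,\omega(X_v,X_w) - X_v\,\omega(X_u,X_w) + X_w\,\omega(X_u,X_v) - \omega([X_u,X_v],X_w) + \omega([X_u,X_w],X_v) - \omega([X_v,X_w],X_u).
\]
For the first three terms, I would use $\omega(X_v,X_w)(\eta) = \langle\eta,[v,w]\rangle$, so that $X_u$ applied to this function is the derivative of $\eta\mapsto\langle\eta,[v,w]\rangle$ along the flow of $X_u$, which at $\xi$ equals $\langle \ad^*_u(\xi),[v,w]\rangle = -\langle\xi,[u,[v,w]]\rangle$. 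For the last three terms, I would use the bracket relation $[X_u,X_v] = -X_{[u,v]}$ together with the definition of $\omega$ to get $\omega([X_u,X_v],X_w)(\xi) = -\omega(X_{[u,v]},X_w)(\xi) = -\langle\xi,[[u,v],w]\rangle$. Collecting all six terms, the expression becomes (up to an overall sign)
\[
-\langle\xi,[u,[v,w]]\rangle + \langle\xi,[v,[u,w]]\rangle - \langle\xi,[w,[u,v]]\rangle + \langle\xi,[[u,v],w]\rangle - \langle\xi,[[u,w],v]\rangle + \langle\xi,[[v,w],u]\rangle,
\]
and after rewriting each double bracket using anticommutativity this is exactly $\langle\xi,\cdot\rangle$ applied to twice the left-hand side of the Jacobi identity, hence zero.

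I expect the main obstacle to be bookkeeping rather than conceptual: keeping the sign conventions consistent between $\ad^*_u(\xi)(v) = -\xi([u,v])$, the induced bracket of fundamental vector fields, and the Cartan formula for $\mathrm{d}\omega$, so that the final cancellation is genuinely the Jacobi identity and not an off-by-sign artifact. A secondary technical point worth a sentence is the justification that the fundamental vector fields $X_u$ are globally defined smooth vector fields on $\mathcal{O}^*$ tangent to the orbit and that evaluating $\mathrm{d}\omega$ on them at an arbitrary point is legitimate — this is where one invokes that $\omega$ is well-defined pointwise (Lemma \ref{lemma3} and the $\Ad^*$-invariance computation just above) and that the $X_u$ span each tangent space, so vanishing of $\mathrm{d}\omega$ on all triples $(X_u,X_v,X_w)$ forces $\mathrm{d}\omega = 0$.
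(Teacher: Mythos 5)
Your proof is correct, and it follows the same overall strategy as the paper (evaluate $\mathrm{d}\omega$ on tangent vectors of the form $\ad^*_u(\xi)$ via the invariant formula for the exterior derivative, and reduce everything to the Jacobi identity), but the execution differs in an instructive way. The paper simply asserts that the three directional-derivative terms vanish ``since $\omega_\xi$ is constant relative to $\xi$,'' and then gets $\mathrm{d}\omega_\xi(X,Y,Z)=0$ from the Jacobi identity applied to the three bracket terms alone; this is a shortcut, since for the natural extension of the tangent vectors to fundamental vector fields $X_u(\eta)=\ad^*_u(\eta)$ the function $\eta\mapsto\omega_\eta(X_v,X_w)=\langle\eta,[v,w]\rangle$ is linear, not constant, in $\eta$. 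Your computation handles this honestly: you find $X_u\,\omega(X_v,X_w)\vert_\xi=-\langle\xi,[u,[v,w]]\rangle$, use $[X_u,X_v]=-X_{[u,v]}$ for the bracket terms, and observe that the six terms combine into $\langle\xi,\cdot\rangle$ applied to twice the Jacobi expression, hence zero. So the cancellation in your version is genuinely a joint cancellation between derivative and bracket terms, whereas the paper's version kills the two groups separately (at the cost of the loose ``constancy'' claim and of implicitly identifying $[X,Y]$ with $\ad^*_{[x,y]}(\xi)$ up to sign). Your closing remarks --- that the $X_u$ span each tangent space so checking triples $(X_u,X_v,X_w)$ suffices, and that the pointwise well-definedness rests on Lemma \ref{lemma3} and the $\Ad^*$-invariance computation --- are exactly the right justifications, and the sign bookkeeping you flag does work out as you expect. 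The only inessential difference is the paper's use of the $\tfrac{1}{3}$-normalization of $\mathrm{d}\omega$, which does not affect the conclusion.
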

\begin{proof}
		We analyse $\omega$ point-wise. 
For any $\xi\in\mathfrak{g}^*$, given $x, y$ and $z$ in $\mathfrak{g}$ set 
$X=\ad_{x}^*(\xi)$, $Y=\ad_{y}^*(\xi)$ and $Z=\ad_{z}^*(\xi)$. We have then
	\begin{align*}
	\mathrm{d}\omega_\xi(X, Y, Z)=&\frac{1}{3}\left(X\omega_{\xi}(Y, Z)-Y\omega_{\xi}(X, Z)+Z\omega_{\xi}(X, Y)\right)+\\
	&+ \frac{1}{3}(-\omega_{\xi}([X, Y], Z)+\omega_{\xi}([X, Z], Y)-\omega_{\xi}([Y, Z],X)).
	\end{align*}
	Note that all the directional derivatives vanish, since $\omega_{\xi}$ is constant relative to $\xi$. 
Thus using Jacobi identity, we reach
	\begin{align*}
	\mathrm{d}\omega_\xi(X, Y, Z) &=\frac{1}{3}(-\omega_{\xi}([X, Y], Z)+\omega_{\xi}([X, Z], Y)-\omega_{\xi}([Y, Z], X)),\\
	&=\frac{1}{3}(-{\xi}([[x, y], z])+{\xi}([[x, z], y])-{\xi}([[y, z], x])),\\
	&=\frac{1}{3}{\xi}(-[[x, y], z])+[[x, z], y]-[[y, z], x],\\
	&=\frac{1}{3}{\xi}(0),\\
	&=0.
	\end{align*}
	Therefore $\omega$ is closed.
\end{proof}
\begin{theorem}
	Let $\mathcal{O}^*\subset\mathfrak{g}^*$ be a coadjoint orbit. 
Then $\omega_{\xi}$ defines a symplectic structure on $\mathcal{O}^*$.
\end{theorem}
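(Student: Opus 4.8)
The plan is to assemble ingredients already in place rather than to compute anything new. By the discussion preceding Lemma \ref{lemma3}, for each $\xi\in\mathfrak g^*$ the prescription $\omega_\xi(\ad_u^*(\xi),\ad_v^*(\xi))=\xi([u,v])$ is a well-defined function of the tangent vectors $\ad_u^*(\xi),\ad_v^*(\xi)\in T_\xi\mathcal O^*$: indeed $T_\xi\mathcal O^*=\{\ad_u^*(\xi):u\in\mathfrak g\}$, and the value $\xi([u,v])$ was shown to depend only on $\ad_u^*(\xi)$ and $\ad_v^*(\xi)$, not on the chosen representatives $u,v$. First I would record that $\omega_\xi$ is bilinear and skew-symmetric: skew-symmetry is immediate from $\xi([u,v])=-\xi([v,u])$, and bilinearity follows from bilinearity of the bracket together with linearity of $u\mapsto\ad_u^*(\xi)$, so that $\omega_\xi$ is linear in each slot.

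Next I would check that these pointwise forms fit together into a genuine $2$-form $\omega\in\Omega^2(\mathcal O^*)$. The fundamental vector fields of the coadjoint action, $X_u(\xi)=\ad_u^*(\xi)$, are smooth, and as $u$ ranges over $\mathfrak g$ they span $T_\xi\mathcal O^*$ at every point; since $\omega(X_u,X_v)(\xi)=\xi([u,v])$ is a smooth (indeed linear) function of $\xi$, the form $\omega$ is smooth. The identity displayed before Lemma \ref{lemma3}, namely $\Ad_g^*\xi([\Ad_g(u),\Ad_g(v)])=\xi([u,v])$, shows that $\omega$ is invariant under the coadjoint action and hence globally well defined on the orbit.

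It then remains to invoke the two lemmas already proved. Lemma \ref{lemma3} gives that $\omega_\xi$ is non-degenerate at every $\xi$, so $\omega$ is a non-degenerate $2$-form; Lemma \ref{lemma4} gives $\mathrm d\omega=0$. By definition, a closed non-degenerate $2$-form is a symplectic structure, so $(\mathcal O^*,\omega)$ is a symplectic manifold, which is exactly the assertion.

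The only genuinely delicate point — and the one I would take care to state cleanly — is the passage from the pointwise prescription to an honest differential form: one must use that the vector fields $X_u$ span the tangent spaces (precisely the identity $T_\xi\mathcal O^*=\{\ad_u^*(\xi):u\in\mathfrak g\}$ recalled earlier) and that the resulting assignment is smooth. Everything else is either already established in Lemmas \ref{lemma3} and \ref{lemma4} or is a one-line consequence of bilinearity of the Lie bracket and linearity of $\xi$, so no serious obstacle arises.
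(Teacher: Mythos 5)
Your proposal is correct and follows essentially the same route as the paper: the well-definedness, skew-symmetry and $\Ad^*$-invariance are taken from the discussion preceding Lemma \ref{lemma3}, and the theorem is then obtained by combining Lemma \ref{lemma3} (non-degeneracy) with Lemma \ref{lemma4} (closedness). The only difference is that you spell out the smoothness of the resulting $2$-form via the fundamental vector fields, a point the paper leaves implicit.
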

\begin{proof}
	This is a direct consequence of Lemmas \ref{lemma3} y \ref{lemma4}.
\end{proof}
\begin{remark}
	This symplectic structure on the coadjoint orbit is canonical and is called the \textbf{Kirillov--Kostant--Souriau form}.
\end{remark}
Now we show below how to endow the adjoint orbit with a symplectic structure which comes from the symplectic form constructed above.
\begin{proposition}
	Suppose $\mathfrak{g}$ admits an $\Ad$--invariant inner product, that is, one subject to
	\[\langle\Ad_g(u),\Ad_g(v)\rangle=\langle u, v\rangle,\]for each $g\in G$. 
Then the identification $\mathfrak{g}\cong\mathfrak{g}^*$ induced by this inner 
product also provides an isomorphism between the adjoint and coadjoint representations.
\end{proposition}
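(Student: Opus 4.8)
The plan is to make explicit the map $\Phi\colon\mathfrak{g}\to\mathfrak{g}^*$ induced by the $\Ad$--invariant inner product, namely $\Phi(u)=\langle u,\,\cdot\,\rangle$, and to check directly that it intertwines the adjoint action on $\mathfrak{g}$ with the coadjoint action on $\mathfrak{g}^*$. Concretely, I would show that for every $g\in G$ one has $\Phi\circ\Ad_g=\Ad_g^*\circ\Phi$ as maps $\mathfrak{g}\to\mathfrak{g}^*$. Since the inner product is non-degenerate, $\Phi$ is a linear isomorphism, so this single identity delivers the claimed isomorphism of representations.

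First I would fix $g\in G$ and $u\in\mathfrak{g}$, and evaluate both sides of the desired equality on an arbitrary $X\in\mathfrak{g}$. On one side, using the definition of the coadjoint action recalled earlier in the excerpt, $\langle\Ad_g^*(\Phi(u)),X\rangle=\langle\Phi(u),\Ad_{g^{-1}}X\rangle=\langle u,\Ad_{g^{-1}}X\rangle$, where the outer pairing is the canonical $\mathfrak{g}^*\times\mathfrak{g}$ pairing and the inner one is the given inner product. On the other side, $\langle\Phi(\Ad_g u),X\rangle=\langle\Ad_g u,X\rangle$. So the statement reduces exactly to $\langle\Ad_g u,X\rangle=\langle u,\Ad_{g^{-1}}X\rangle$, which is just the $\Ad$--invariance hypothesis applied with $g^{-1}$ in place of $g$ (equivalently, that $\Ad_{g^{-1}}$ is the adjoint operator of $\Ad_g$ with respect to $\langle\,,\,\rangle$). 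This is the whole content of the argument; there is no real obstacle, the only thing to be careful about is not to conflate the two different brackets $\langle\,,\,\rangle$ in play (the duality pairing versus the inner product) and to keep track of the inverse.

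Finally, I would note — perhaps as a remark rather than inside the proof — that differentiating the identity $\Phi\circ\Ad_g=\Ad_g^*\circ\Phi$ at $g=e$ yields the infinitesimal version $\Phi\circ\ad_u=\ad_u^*\circ\Phi$ for all $u\in\mathfrak{g}$, so the isomorphism also intertwines the two Lie-algebra representations $\ad$ and $\ad^*$; and that in the case at hand the Killing form of $\mathfrak{sl}(2,\R)$ (suitably normalized) plays the role of $\langle\,,\,\rangle$, which is how the orbit $\mathcal{O}(A)\subset\mathfrak{sl}(2,\R)$ inherits the Kirillov--Kostant--Souriau symplectic form from the corresponding coadjoint orbit. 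This ties the proposition back to the stated goal of the section. No step here requires a lengthy computation, so the write-up can be kept to a few lines.
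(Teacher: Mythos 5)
Your proof is correct and follows essentially the same route as the paper: both define $\varphi(u)=\langle u,\cdot\rangle$ and verify $\varphi\circ\Ad_g=\Ad_g^*\circ\varphi$ by evaluating on a test vector and invoking $\Ad$--invariance (the paper substitutes $v=\Ad_g(w)$ to show both sides equal $\langle u,w\rangle$, which is the same computation as your direct use of invariance with $g^{-1}$). Your closing remark on the infinitesimal version and the Killing form is a fine addition but not needed for the statement itself.
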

\begin{proof}
	The isomorphism of vector spaces $\mathfrak{g}\cong\mathfrak{g}^*$ is given by
	\begin{equation}
	\begin{array}{rrcl}
	&\varphi\colon\mathfrak{g}&\rightarrow &\mathfrak{g}^*\\
	&v&\mapsto&I_v,
	\end{array}
	\label{eq:15}
	\end{equation}
	where $I_v(u)=\langle u, v\rangle$, for each $u\in\mathfrak{g}^*$.
	We want to show that $\varphi$ is an  isomorphism of representations as well, 
namely, an isomorphism of Lie algebras for which the following diagram
\[
\xymatrix{
	\mathfrak{g}\ar[r]^\varphi \ar[d]_{\Ad_g} & \mathfrak{g}^* \ar[d]^{\Ad_g^*} \\
	\mathfrak{g}\ar[r]_\varphi & \mathfrak{g}^* 
}
\]
is commutative.

Since $\varphi$ is an isomorphism of vector spaces, whenever we get $\mathfrak{g}\simeq V$ as vector spaces endowed with Lie brackets, 
then we can make $V^*$ into a Lie algebra by defining a Lie bracket as
\[[a, b]_{*}=\varphi([\varphi^{-1}(a), \varphi^{-1}(b)]), \textup{ for each } a, b\in V^*.\]
Thus, we get $\mathfrak{g}^*\cong(V^*, [\cdot, \cdot]_{*})$ and $\varphi$ is a Lie algebra homomorphism. 
In fact, it is easy to check the equality
\[\varphi([a, b])=[\varphi(a), \varphi(b)]_{*}.\]
Next, let $v\in\mathfrak{g}$ be a fixed element. Since $\Ad_g$ is invertible, 
there exists $w\in\mathfrak{g}$ such that $\Ad_g(w)=v$. 
Using $\Ad$--invariance for the inner product, we obtain
\[\varphi(\Ad_g(u))(v)=\langle\Ad_g(u), v\rangle= \langle\Ad_g(u), \Ad_g(w)\rangle= \langle u, w\rangle. \]
In the same way, we get
\[\Ad_g^*(\varphi(u))(v)=\varphi(u)(\Ad_{g^{-1}}(v))=\langle u, \Ad_{g^{-1}}(v)\rangle=\langle u, w\rangle.\]
Therefore, the diagram is commutative and $\varphi$ is a representation isomorphism.
\end{proof}

\begin{remark}
Notice that the above result holds in a more general context where the product is only non-degenerate and not necessarily positive definite, and hence not a inner product. It follows from the fact that the linear map induced by a non-degenerate product between $\mathfrak{g}$ and $\mathfrak{g}^*$ is an isomophism. The proof of the above proposition also holds in this case, since it only requires the existence of such isomorphism.
\end{remark}

	Let $\mathfrak{g}$ be a Lie algebra over a field $\mathbb{F}$. 
The \textbf{Killing form} on $\mathfrak{g}$ is the map
	\[\begin{array}{rlll}
	B\colon\mathfrak{g}\times\mathfrak{g}&\rightarrow &\mathbb{F}\\
	(x, y)&\mapsto&B(x, y)=\tr (\ad_x\circ\ad_y).
	\end{array}\]

\begin{proposition}
	The Killing form is $\Ad$-invariant.
\end{proposition}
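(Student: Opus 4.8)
The plan is to show that the Killing form satisfies $B(\Ad_g x, \Ad_g y) = B(x,y)$ for all $g \in G$ and $x, y \in \mathfrak{g}$. The key observation is that conjugation by $g$ intertwines the adjoint action on $\mathfrak{g}$ with the adjoint action on $\mathfrak{gl}(\mathfrak{g})$: that is, $\ad_{\Ad_g x} = \Ad_g \circ \ad_x \circ \Ad_g^{-1}$, where on the right-hand side $\Ad_g$ denotes the linear automorphism of $\mathfrak{g}$. This identity follows directly from the fact that $\Ad_g$ is a Lie algebra automorphism: for any $y \in \mathfrak{g}$, $\ad_{\Ad_g x}(y) = [\Ad_g x, y] = [\Ad_g x, \Ad_g(\Ad_g^{-1} y)] = \Ad_g[x, \Ad_g^{-1} y] = (\Ad_g \circ \ad_x \circ \Ad_g^{-1})(y)$.

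First I would establish this conjugation identity. Then I would compute
\[
B(\Ad_g x, \Ad_g y) = \tr\left(\ad_{\Ad_g x} \circ \ad_{\Ad_g y}\right) = \tr\left(\Ad_g \ad_x \Ad_g^{-1} \Ad_g \ad_y \Ad_g^{-1}\right) = \tr\left(\Ad_g (\ad_x \circ \ad_y) \Ad_g^{-1}\right).
\]
Finally, invoking the cyclic invariance of the trace, $\tr(\Ad_g M \Ad_g^{-1}) = \tr(M)$ for any endomorphism $M$ of $\mathfrak{g}$, I conclude $B(\Ad_g x, \Ad_g y) = \tr(\ad_x \circ \ad_y) = B(x,y)$, which is exactly $\Ad$-invariance.

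I do not anticipate a serious obstacle here; the proof is a short formal computation. The only point requiring a little care is the conjugation identity $\ad_{\Ad_g x} = \Ad_g \circ \ad_x \circ \Ad_g^{-1}$, which rests on $\Ad_g$ being an automorphism of the Lie algebra $\mathfrak{g}$ — a standard fact, since $\Ad_g(x) = gxg^{-1}$ in the matrix case and conjugation visibly respects the commutator bracket. Once that identity is in hand, everything else is the cyclicity of the trace. For completeness I would also remark that the same argument, differentiated at the identity, yields the infinitesimal version $B([z,x], y) + B(x, [z,y]) = 0$, i.e. $\ad$-invariance of $B$, although this is not strictly needed for the statement as phrased.
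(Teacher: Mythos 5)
Your proof is correct and follows essentially the same route as the paper: rewrite $\ad_{\Ad_g x}\circ\ad_{\Ad_g y}$ as a conjugate of $\ad_x\circ\ad_y$ and invoke cyclicity of the trace. In fact you make explicit the key identity $\ad_{\Ad_g x}=\Ad_g\circ\ad_x\circ\Ad_g^{-1}$, which the paper uses implicitly (writing the conjugating map loosely as $g$ rather than $\Ad_g$), so your version is slightly more careful but not a different argument.
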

\begin{proof}
	In fact, we have
	\begin{align*}
	B(\Ad_g(x), \Ad_g(y))&=\tr (g\,\circ\,\ad_x\circ\ad_y\circ \,g^{-1})\\
	&=\tr (\ad_x\circ\ad_y )\\
	&=B(x,y).
	\end{align*}
\end{proof}
\begin{proposition}
	The Killing form is  symmetric and bilinear.
\label{prop:7}\end{proposition}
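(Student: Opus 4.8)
The plan is to deduce both properties directly from two elementary facts already available: that the assignment $x \mapsto \ad_x$ is linear (noted just after the definition of the adjoint representation of $\mathfrak{g}$), and that the trace of an endomorphism of a finite-dimensional vector space is linear and satisfies the cyclic identity $\tr(S \circ T) = \tr(T \circ S)$.

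First I would establish bilinearity. Fix $y \in \mathfrak{g}$ and take $x, x' \in \mathfrak{g}$ and $\alpha \in \mathbb{F}$. Since $\ad$ is linear we have $\ad_{\alpha x + x'} = \alpha\,\ad_x + \ad_{x'}$, and composing on the right with $\ad_y$ (composition of linear maps being additive and commuting with scalars in the first slot) gives $\ad_{\alpha x + x'} \circ \ad_y = \alpha\,(\ad_x \circ \ad_y) + \ad_{x'} \circ \ad_y$. Applying the trace, which is linear, yields $B(\alpha x + x', y) = \alpha B(x,y) + B(x',y)$. Linearity in the second argument is obtained the same way by composing on the left with $\ad_x$, or, more economically, it follows from linearity in the first argument once symmetry is known.

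Next I would prove symmetry. For $x, y \in \mathfrak{g}$ put $S = \ad_x$ and $T = \ad_y$, both endomorphisms of the finite-dimensional space $\mathfrak{g}$. The cyclic property of the trace gives $\tr(S \circ T) = \tr(T \circ S)$, that is, $B(x,y) = \tr(\ad_x \circ \ad_y) = \tr(\ad_y \circ \ad_x) = B(y,x)$. Combining this with the previous step shows $B$ is a symmetric bilinear form.

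I do not expect a genuine obstacle here; the only point worth a moment's care is that the argument should be purely algebraic, since $\mathfrak{g}$ is a Lie algebra over an arbitrary field $\mathbb{F}$ — so I would make sure to invoke only the linearity of $\ad$ and the algebraic cyclic identity for the trace of a composition of linear maps, neither of which requires positivity or any analytic input.
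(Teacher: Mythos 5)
Your proposal is correct and follows essentially the same route as the paper: symmetry via the identity $\tr(S\circ T)=\tr(T\circ S)$, and bilinearity from the linearity of $x\mapsto\ad_x$ together with the linearity of the trace, with the second argument handled by symmetry. No gaps; your remark that the argument is purely algebraic over an arbitrary field $\mathbb{F}$ is a fair point, consistent with the paper's setting.
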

\begin{proof}
Symmetry follows from the property $\tr(MN)=\tr(NM)$. 
Thus, we have $$B(x, y)=\tr(\ad_x\circ \ad_y)=\tr(\ad_y\circ\ad_x)=B(y,x).$$
Since $\ad$ and trace are linear, we get
\begin{align*}
B(\alpha x+\beta y, z)&=\tr (\ad(\alpha x+\beta y)\circ \ad z)\\
&=\tr ((\alpha\ad x+\beta\ad y)\circ \ad z)\\
&=\alpha\tr (\alpha\ad x+\ad z)+\beta\tr (\ad y\circ\ad z)\\
&=\alpha B(x,z)+\beta B(y, z),
\end{align*}	
for each $x, y, z\in\mathfrak{g}$. Thus, $B$ is linear on the first entry. 
By the symmetry we get the same for the second entry.
\end{proof}
Hanceforth, for simplicity we use the notation $\langle a, b\rangle = B(a, b)$.

\begin{proposition} The Killing form is non-degenerate. \end{proposition}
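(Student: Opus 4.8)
The plan is to compute the Killing form of $\mathfrak{sl}(2,\R)$ explicitly in the basis $A, B, C$ from (\ref{baseSL2}) and check that the resulting Gram matrix is invertible. First I would recall from Section \ref{section5} that the relations $[B,A]=C$, $[C,A]=B$, $[B,C]=A$ give, for $H=xA+yB+zC$, the matrix representation
\[\ad(H)=\begin{bmatrix} 0&-z&-y\\ z&0&x\\ y&-x&0 \end{bmatrix}\]
in the ordered basis $(A,B,C)$. In particular $\ad_A$, $\ad_B$, $\ad_C$ are obtained by setting $(x,y,z)$ equal to $(1,0,0)$, $(0,1,0)$, $(0,0,1)$ respectively.

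Next I would compute the pairwise products and traces. A direct calculation gives $\ad_A^2 = \mathrm{diag}(0,1,1)$-type matrix with trace $2$, and likewise $B(B,B)=\tr(\ad_B^2)=2$, while $B(C,C)=\tr(\ad_C^2)=-2$ (the sign reflects that $C$ generates the compact direction). For the off-diagonal entries one checks $\tr(\ad_A\ad_B)=\tr(\ad_A\ad_C)=\tr(\ad_B\ad_C)=0$, since each such product is a matrix with zero diagonal. Hence, with respect to the basis $(A,B,C)$, the Killing form has Gram matrix
\[[B]=\begin{bmatrix} 2&0&0\\ 0&2&0\\ 0&0&-2 \end{bmatrix}.\]
Its determinant is $-8\neq 0$, so $B$ is non-degenerate. (Equivalently, one may simply invoke Cartan's criterion together with the semisimplicity of $\mathfrak{sl}(2,\R)$, but since the paper's style is to exhibit details, the explicit $3\times 3$ computation is more in keeping with the rest of the text, and it also makes transparent the signature $(2,1)$ that underlies the $x^2+y^2-z^2$ quadratic form appearing throughout.)

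The only mild obstacle is bookkeeping in the trace computations: one must be careful with the ordering of the basis and with signs coming from the antisymmetry of $\ad(H)$, since an error there would corrupt the signature even if it preserved non-degeneracy. I would therefore present the matrix $\ad(H)$ once, note that $\ad_A\ad_B$, $\ad_A\ad_C$, $\ad_B\ad_C$ all have vanishing diagonal by inspection, and record the three diagonal values $\tr(\ad_A^2)=2$, $\tr(\ad_B^2)=2$, $\tr(\ad_C^2)=-2$, concluding that $\det[B]=-8\neq 0$ and hence that $B$ is non-degenerate. This also retroactively justifies the use of the Killing form in the preceding proposition to transport the Kirillov--Kostant--Souriau form from the coadjoint orbit to $\mathcal{O}(A)$.
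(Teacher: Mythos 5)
Your proposal is correct and follows essentially the same route as the paper: both compute the entries $\langle A,A\rangle=\langle B,B\rangle=2$, $\langle C,C\rangle=-2$ and the vanishing off-diagonal pairings from the explicit $3\times 3$ matrices $\ad(A),\ad(B),\ad(C)$ in the basis $(A,B,C)$. The only cosmetic difference is the final step: you conclude from $\det\bigl(\mathrm{diag}(2,2,-2)\bigr)=-8\neq 0$, while the paper writes the operator $\langle H,\cdot\rangle$ as the row vector $[\,2x\ \ 2y\ \ -2z\,]$ and observes it vanishes only for $H=0$ --- two equivalent formulations of the same computation.
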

\begin{proof}

Since $[B, A]=C, [C, A]=B$, and $[B, C]=A$ hold, we get 
\[\ad(A)=\begin{bmatrix}
0&0&0\\
0&0&-1\\
0&-1&0\\
\end{bmatrix},\,\, \ad(B)=\begin{bmatrix}
0&0&1\\
0&0&0\\
1&0&0\\
\end{bmatrix},\,\, \ad(C)=\begin{bmatrix}
0&-1&0\\
1&0&0\\
0&0&0\\
\end{bmatrix}.\]
Direct computation yields
\begin{align*}
\langle A, A\rangle&=\tr(\ad (A)\circ \ad(A))=\tr \left(\begin{bmatrix}
0&0&0\\
0&1&0\\
0&0&1\\
\end{bmatrix}\right)=2,\\
\langle B, B\rangle&=\tr(\ad (B)\circ \ad(B))=\tr \left(\begin{bmatrix}
		1&0&0\\
		0&0&0\\
		0&0&1\\
	\end{bmatrix}\right)=2,\\
\langle C, C\rangle&=\tr(\ad (C)\circ \ad(C))=\tr \left(\begin{bmatrix}
	-1&0&0\\
	0&-1&0\\
	0&0&0\\
\end{bmatrix}\right)=-2,\\
\langle B, A\rangle&=\tr(\ad (B)\circ \ad(A))=\tr \left(\begin{bmatrix}
0&-1&0\\
0&0&0\\
0&0&0\\
\end{bmatrix}\right)=0,\\
\langle A, C\rangle&=\tr(\ad (A)\circ \ad(C))=\tr \left(\begin{bmatrix}
0&0&0\\
0&0&0\\
-1&0&0\\
\end{bmatrix}\right)=0,\\
\langle B, C\rangle&=\tr(\ad (B)\circ \ad(C))=\tr \left(\begin{bmatrix}
0&0&0\\
0&0&0\\
0&-1&0\\
\end{bmatrix}\right)=0.
\end{align*}

Hence, if $H=xA+yB+zC$, expressing the operator $\langle H, \cdot \rangle$ on its matrix representation form we obtain

$$
\langle H, \cdot \rangle = 
\begin{bmatrix}
2x&2y&-2z\\
\end{bmatrix}.
$$

Therefore, the map $\langle H, \cdot \rangle$ is zero if and only if $H$ is zero.
\end{proof}

In this way, we identify the adjoint orbit $\mathcal{O}(A)$ with the coadjoint orbit 
$\mathcal{O}^*(A)=\{f\in\mathfrak{g}^*\colon\exists u\in G, \Ad_u^*(\varphi(A))=f\}$, 
where $\varphi$ is the map in (\ref{eq:15}). It follows  that we can induce on 
$\mathcal{O}(A)$ the symplectic structure built on $\mathcal{O}^*(A)$ as
\[\omega_p'(\ad_p(a), \ad_p(b)):=\omega_{\varphi(p)}(\ad_{\varphi(p)}^*(a), \ad_{\varphi(p)}^*(b))=\langle p, [a, b] \rangle,\]
for each $a, b, p\in\mathcal{O}(A)$.
\begin{corollary}
	The pair $(\mathcal{O}(A), \omega')$ is a symplectic manifold.
\hfill $\square$
\end{corollary}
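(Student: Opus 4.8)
The plan is to recognise $\omega'$ as the pullback of the Kirillov--Kostant--Souriau form along the diffeomorphism induced by $\varphi$, and then to use that being a symplectic form is preserved under pullback by a diffeomorphism.

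First I would note that, since the Killing form is $\Ad$--invariant and non-degenerate, the proposition on $\Ad$--invariant products applies with $\langle\,,\,\rangle=B$, so the linear isomorphism $\varphi\colon\mathfrak{g}\to\mathfrak{g}^*$ of (\ref{eq:15}) intertwines the adjoint and coadjoint actions, i.e.\ $\Ad_g^*\circ\varphi=\varphi\circ\Ad_g$ for every $g\in\SL(2,\R)$. Hence $\varphi$ sends the orbit of $A$ onto the orbit of $\varphi(A)$, that is $\varphi(\mathcal{O}(A))=\mathcal{O}^*(A)$, and the restriction $\psi:=\varphi|_{\mathcal{O}(A)}\colon\mathcal{O}(A)\to\mathcal{O}^*(A)$ is a bijection which is smooth with smooth inverse, being the restriction of a linear isomorphism; thus $\psi$ is a diffeomorphism. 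Differentiating the intertwining relation shows that at each $p\in\mathcal{O}(A)$ the map $d\psi_p$ carries $T_p\mathcal{O}(A)=\{\ad_p(a):a\in\mathfrak{g}\}$ isomorphically onto $T_{\varphi(p)}\mathcal{O}^*(A)=\{\ad^*_{\varphi(p)}(a):a\in\mathfrak{g}\}$, sending $\ad_p(a)$ to $\ad^*_{\varphi(p)}(a)$.

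Granting this, the defining identity $\omega'_p(\ad_p(a),\ad_p(b))=\omega_{\varphi(p)}(\ad^*_{\varphi(p)}(a),\ad^*_{\varphi(p)}(b))$ says precisely that $\omega'=\psi^*\omega$; in particular $\omega'$ is a well-defined smooth $2$--form, since the right-hand side does not depend on the choices of $a$ and $b$ (this was the point checked before Lemma \ref{lemma3}). Skew-symmetry and bilinearity of $\omega'_p$ on $T_p\mathcal{O}(A)$ follow from the same properties of $\omega_{\varphi(p)}$ together with linearity of $d\psi_p$; non-degeneracy of $\omega'_p$ follows from non-degeneracy of $\omega_{\varphi(p)}$ (Lemma \ref{lemma3}) and the fact that $d\psi_p$ is an isomorphism; and $\mathrm{d}\omega'=\psi^*\mathrm{d}\omega=0$ because $\mathrm{d}\omega=0$ (Lemma \ref{lemma4}) and pullback commutes with $\mathrm{d}$. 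Therefore $\omega'$ is a symplectic structure on $\mathcal{O}(A)$, which is the assertion. There is no genuinely hard step: the statement is a transport of structure, and the only point needing a little care is checking that $\varphi$ maps $\mathcal{O}(A)$ onto the coadjoint orbit of $\varphi(A)$ and that $d\psi$ matches the two tangent-space descriptions compatibly with the bracket formula — both of which are immediate from $\Ad_g^*\circ\varphi=\varphi\circ\Ad_g$. One could shorten the argument to the single observation that a diffeomorphism pulls a symplectic form back to a symplectic form, invoking the theorem that already put one on $\mathcal{O}^*(A)$; I would nonetheless spell it out as above, to make transparent why the formula in the text for $\omega'$ is indeed a pullback.
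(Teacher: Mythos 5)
Your proposal is correct and follows essentially the same route as the paper, which states the corollary without proof precisely because it is the transport of the Kirillov--Kostant--Souriau structure through the Killing-form identification $\varphi$ established in the preceding propositions and lemmas. You simply make explicit the pullback mechanism ($\omega'=\psi^*\omega$, with $\psi=\varphi|_{\mathcal{O}(A)}$ a diffeomorphism onto $\mathcal{O}^*(A)$) that the paper leaves implicit, which is a reasonable and accurate elaboration.
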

\begin{remark}
	The Killing form is non-degenerate because we are working with a semisimple Lie algebra. 
This is essential to achieve the identification between adjoint and coadjoint orbits and, 
consequently, to perform the above construction.
\end{remark}
 In \cite{GGSM1}, the authors construct  another symplectic form which does not come from the Kirillov--Kostant--Souriau symplectic form. 
Their method involves Lie theory and the construction is performed directly on adjoint orbits of semisimple Lie groups.
It remains to carry out a complete classification of adjoint orbits in higher dimension.

\vspace{1cm}

%
%

\noindent
{\small Francisco Rubilar,\\
Universidad Cat\'olica del Norte,\\
Av. Angamos 0610, Antofagasta, Chile.\\
e-mail: francisco.rubilar@alumnos.ucn.cl}
\vspace{.5cm}

\noindent
{\small Leonardo Schultz \\
Universidade Estadual de Campinas\\
Cidade Universit\'aria Zeferino Vaz - Bar\~{a}o Geraldo\\
Campinas - SP, 13083-970, Brasil.\\
e-mail: leos.araujo@hotmail.com}


\begin{thebibliography}{99}

\bibitem[1]{Arv} Arvanitoyeorgos, A., \textit{Geometry of flag manifolds}. International Journal of Geometric Methods in Modern Physics. Vol. \textbf{3}, 957-974, (2006). 

\bibitem[2]{ABB} Azad, H., van den Ban, E., Biswas, I., \textit{Symplectic geometry of semisimple
	orbits}. Indag. Mathem., N. S. 19 (4) (2008), 507--533.

\bibitem[3]{BeHo} Bernatska, J., Holod, P., \textit{Geometry and Topology of Coadjoint Orbits of Semisimple Lie Groups}. Proceedings of the Ninth International Conference on Geometry, Integrability and Quantization, 146--166, Softex, Sofia, Bulgaria, (2008).

\bibitem[4]{BuMa} Bursztyn, H., Macarini, L., \textit{Introdu\c{c}\~{a}o \`a geometria simpl\'etica}, Rio de Janeiro, RJ; S\~{a}o Paulo, SP:	IMPA: USP, (2006).

\bibitem[5]{CdS} Cannas da Silva, A., {\it Lectures on Symplectic Geometry}, Lecture Notes in Mathematics, Vol. 1764, Springer-Verlag, Berlin, Heidelberg, (2008).

\bibitem[6]{ErWi} Erdamann, K.,  Wildon, M. J., \textit{Introduction
	to Lie algebras}, Springer Undergraduate Mathematics Series, London: Springer, (2006). 

\bibitem[7]{GGSM1}  Gasparim E.,  Grama L., San Martin L. A. B.,  \textit{ Symplectic Lefschetz fibrations on adjoint orbits},
Forum Math. {\bf 28} n. 5 (2016) 967--980. 

\bibitem[8]{Kir1} Kirillov, A., \textit{Elements de la th\'eorie des repr\'esentations}. Editions Mir, Moscou (1974).

\bibitem[9]{Kir2}Kirillov, A.,  \textit{Lectures on the Orbit Method}. Graduate Studies in Mathematics, Volume 64, 2004, American Mathematical Society.

\bibitem[10]{LN} Nicolaescu, L., \textit{An invitation to Morse theory}. Universitext, Springer-Verlag, New York, (2011).

\bibitem[11]{SM} San Martin, L. A. B.,  \textit{\`Algebras de Lie}. Campinas,
SP: Editora da UNICAMP,  (1999).


\end{thebibliography}
\end{document}